\newcommand{\GL}{\operatorname{GL}(d,\mathbb{R})}
\newtheorem{defn}{Definition}[section]
\newtheorem{coro}[defn]{Corollary}
\newtheorem{lemma}[defn]{Lemma}
\newtheorem{rem}[defn]{Remark}
\newtheorem{prop}[defn]{Proposition}
\newtheorem{theo}[defn]{Theorem}
\newtheorem{claim}[defn]{Claim}
\newtheorem{ltheorem}{Theorem} 
\begin{document}

\title{Simple Lyapunov spectrum of partially hyperbolic diffeomorphisms}

\author[K. Marin]{Karina Marin}
\address{Departamento de Matem\'atica, Universidade Federal de Minas Gerais (UFMG), Av. Ant\^onio Carlos 6627, 31270-901. Belo Horizonte -- MG, Brasil}
\email{kmarin@mat.ufmg.br}

\author[D. Obata]{Davi Obata}
\address{Brigham Young University,  275 TMCB Brigham Young University Provo, UT 84602 }
\email{davi.obata@mathematics.byu.edu}

\author[M. Poletti]{Mauricio Poletti}
\address{Departamento de Matem\'atica, Universidade Federal do Cear\'a (UFC), Campus do Pici,
Bloco 914, CEP 60455-760. Fortaleza -- CE, Brasil}
\email{mpoletti@mat.ufc.br}

\thanks{
KM was partially supported by FAPEMIG, CAPES-Finance Code 001, Instituto Serrapilheira grant number Serra-R-2211-41879.
DO was partially supported by the National Science Foundation under Grant No.\ DMS-2349380. MP was partially supported by CAPES-Finance Code 001, Instituto Serrapilheira grant number Serra-R-2211-41879 and FUNCAP grant AJC 06/2022.}

\begin{abstract}
We study the simplicity of the Lyapunov spectrum of partially hyperbolic diffeomorphisms. We prove that a class of volume-preserving partially hyperbolic diffeomorphisms is $C^r$-accumulated by $C^2$-open sets with simple spectrum. Also we prove that a class of partially hyperbolic maps has simple spectrum generically for the measures of maximal entropy. In order to prove these results, we give a criterion for simplicity of the Lyapunov spectrum in terms of periodic points homoclinically related to the invariant measure.
\end{abstract}

\maketitle

\section{introduction}
The Lyapunov spectrum of a diffeomorphism, with an ergodic measure, is the set of its Lyapunov exponents. The understanding of this set gives information on the asymptotic behavior of almost every orbit. For example, if all the Lyapunov exponents are non-zero, then there exist hyperbolic periodic points. If all the Lyapunov exponents are negative (or positive), it implies that the measure is supported on a periodic orbit. 

In this paper, we deal with the problem of simplicity of the Lyapunov spectrum. By Oseledets theorem, given a diffeomorphism $f:M\to M$, with an ergodic measure $\mu$ there exist at most $d=\dim M$ different Lyapunov exponents. We say that $(f,\mu)$ has \emph{simple Lyapunov spectrum} if it has $d$ different Lyapunov exponents.
The main question we want to answer is how typically $(f,\mu)$ has simple Lyapunov spectrum.

For example, if $A\in \operatorname{SL}(2,\mathbb{R})$ is a linear map and $\mu$ is the Dirac delta measure on the origin, the Lyapunov exponents of $A$ are logarithms of the eigenvalues of $A$. Hence, simple spectrum is an open property in $\operatorname{SL}(2,\mathbb{R})$, but is not dense. There are open sets with pairs of complex eigenvalues of modulus $1$.

A simpler model to study Lyapunov exponents are linear cocycles, that is: when we have maps $f\colon M\to M$, $A\colon M\to \GL$ and an ergodic measure $\mu$. In this case the Lyapunov exponents measure the exponential growth rate of $A^n(x)=A(f^{n-1}(x))\cdots A(x)$. If $A=Df$, the cocycle is called the derivative cocycle of $f$ and the Lyapunov exponents of $A$ coincide with the Lyapunov exponents of the diffeomorphism.

When the base map $f$ is fixed and we study the spectrum with respect to $A$, there are some results about its simplicity.
For random products of matrices, it was proved by Guivarc’h–Raugi \cite{GR} and Gol’dsheid–Margulis \cite{GM} that generically the cocycle has simple spectrum. This was extended by Bonatti-Viana \cite{BV2} and Avila-Viana \cite{AV} for H\"older fiber-bunched linear cocycles over hyperbolic maps. Also, Backes-Lima-Poletti-Varandas~\cite{BPV} proved genericity of simple spectrum for a class of linear cocycles over non-uniformly hyperbolic diffeomorphisms.
For cocycles over partially hyperbolic maps Poletti-Viana~\cite{PV} and Bezerra-Poletti~\cite{BP2} give open sets with simple spectrum. 

For the derivative cocycle, the problem is much more complicated, because the base and the cocyle are related. It is not possible to perturb them independently of each other.

One example of application of simplicity of the Lyapunov spectrum was given by Avila-Crovisier-Wilkinson in the proof of Pugh-Shub's conjecture for the $C^1$-topology. That is, stable ergodicity is $C^1$-dense among partially hyperbolic systems.  In their proof, after perturbation, they obtain some affine horseshoes with large entropy and simple spectrum, and they use this to build superblenders (see Theorems B', 3.2 and C in \cite{ACW}).

In this paper, we find open sets with simple Lyapunov spectrum for two classes of partially hyperbolic diffeomorphisms.

Let $f:M\to M$ be a $C^r, r\geq2$, partially hyperbolic diffeomorphism with splitting $E^{uu}\oplus E^c\oplus E^{ss}$. See Section~\ref{section.preli} for the precise definition. 

The first class is formed by volume-preserving partially hyperbolic diffeomorphisms. Avila-Santamaria-Viana studied the positivity of Lyapunov exponents for linear cocycles over these diffeomorphisms \cite{ASV}. Marin~\cite{M}, Liang-Marin-Yang~\cite{LMY} proved that, when the center dimension is $2$, there is a $C^2$-open and $C^r$-dense set of a class of symplectic maps where the volume measure is hyperbolic.  

Define the asymptotic $u$-conformality of $f$ as \begin{equation}\label{cu}c_u(f)=\lim_{n\to \infty}\frac{1}{n}
\log \max_{x\in M}\|Df_x^n|_{ E^{uu}}\|\|(Df_x^n)^{-1}|_{E^{uu}}\|.\end{equation}
Analogously we define $c_s(f)$ changing $E^{uu}$ by $E^{ss}$. Let $c(f)=\max\{c_u(f),c_s(f)\}$.

Denote $m$ the Lebesgue measure. We say that $f$ is a $\chi$-hyperbolic map, or $m$ is $\chi$-hyperbolic, if all the Lyapunov exponents have absolute value larger than $\chi$.

We say that a $\chi$-hyperbolic map $f$ is \emph{$u$-bunched} if the unstable bundle $E^{uu}$ is $\alpha$-H\"older and
$$c_u(f)<\frac{\alpha\chi}{2}.$$
Analogously, we define \emph{$s$-bunched}. We say that $f$ is \emph{$su$-bunched} when it is $s$ and $u$-bunched.

A $C^2$ volume-preserving diffeomorphim $f$ is said to be \emph{stably ergodic} if any $C^2$ volume-preserving diffeomorphism sufficiently $C^1$-close to $f$ is ergodic.

For fixed $r\geq 2$, take $\mathcal{A}$ to be the set formed by stably-ergodic, non-uniformly hyperbolic, volume-preserving $C^r$ partially hyperbolic diffeomorphisms, such that the center Lyapunov exponents vary continuously with respect to $f$. In this setting we have the following result.
\begin{ltheorem}\label{theo-intro}
    Fix $r\geq 2$ and let $f\in \mathcal{A}$ be $su$-bunched. Then, $f$ can be $C^r$-approximated by $C^2$ open sets of ergodic volume-preserving maps with simple Lyapunov spectrum along $E^{uu}$ and $E^{ss}$ for the volume measure. That is, the Lyapunov exponents associated to $Dg\vert_{E^{uu/ss}}$ have multiplicity one.
\end{ltheorem}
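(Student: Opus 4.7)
The plan is to apply the simplicity criterion advertised in the abstract: if $\mu$ is an ergodic hyperbolic measure and there is a hyperbolic periodic point $p$ homoclinically related to $\mu$ whose derivative restricted to $E^{uu}$ (resp.\ $E^{ss}$) has distinct-modulus eigenvalues, then $(f,\mu)$ has simple Lyapunov spectrum along $E^{uu}$ (resp.\ $E^{ss}$). The theorem then reduces to two tasks: (i) performing a $C^r$-small volume-preserving perturbation of $f$ to produce such a periodic point, and (ii) showing that the criterion's hypotheses are robust on a $C^2$-neighborhood.

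For task (i), since $f\in\mathcal{A}$ is non-uniformly hyperbolic and volume preserving, Katok's closing lemma produces hyperbolic periodic points $p$ whose Pesin stable/unstable manifolds transversely intersect a positive-$m$-measure Pesin block. The $su$-bunching gives well-defined H\"older holonomies along $E^{uu}$ and $E^{ss}$, which places $p$ in the homoclinic class of $m$ in the sense required by the criterion. Next, by a Franks-type lemma adapted to the $C^r$-topology and the volume-preservation constraint, I would perform a perturbation supported in a small neighborhood of $p$ that modifies $Df^{\pi(p)}|_{E^{uu}}$ and $Df^{\pi(p)}|_{E^{ss}}$ to have real, pairwise distinct-modulus eigenvalues. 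The volume constraint only fixes $\det Df^{\pi(p)}$, which is compatible with prescribing eigenvalues on each bundle separately, so genericity of simple spectrum inside the admissible family of derivatives delivers the desired periodic point. Call the resulting diffeomorphism $g$; it is $C^r$-close to $f$.

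For task (ii), I would exhibit a $C^2$-neighborhood $\mathcal{U}$ of $g$ on which the criterion continues to apply. The continuation $p_{g'}$ of $p$ persists as a hyperbolic periodic point for $g'\in\mathcal{U}$, and since the partially hyperbolic splitting is $C^1$-continuous, the eigenvalues of $Dg'^{\pi(p)}$ on $E^{uu}_{g'}(p_{g'})$ and $E^{ss}_{g'}(p_{g'})$ vary continuously and remain simple after shrinking $\mathcal{U}$. Stable ergodicity of $f$ is a $C^1$-open property, so shrinking further keeps maps in $\mathcal{U}$ ergodic. The hypothesis that $f\in\mathcal{A}$ has continuously varying center exponents at $f$ transfers to continuity at $g$ (provided the perturbation to $g$ is taken $C^r$-small, so $g$ itself remains in $\mathcal{A}$), and combined with non-uniform hyperbolicity of $m$ at $g$, it gives a uniform gap between the center exponents and zero on a $C^2$-neighborhood: hence $m$ remains non-uniformly hyperbolic for every $g'\in\mathcal{U}$. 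Finally, the homoclinic relation between $p_{g'}$ and $m_{g'}$ survives because it is witnessed by a transverse intersection of Pesin manifolds of positive $m$-measure, which persists under $C^2$-small perturbations thanks to absolute continuity of the stable/unstable foliations provided by $su$-bunching.

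The main obstacle I anticipate is the $C^2$-robustness of the homoclinic relation to the volume measure. Although the periodic point and its hyperbolic eigenvalues are plainly $C^1$-stable, the hypothesis ``$p$ is homoclinically related to $m$'' involves Pesin theory, which is notoriously discontinuous under perturbation. Controlling it on a full $C^2$-neighborhood requires using precisely the features built into $\mathcal{A}$ (stable ergodicity plus continuity of center exponents) together with the $su$-bunching to propagate the transverse intersection of Pesin manifolds to nearby maps; establishing this persistence rigorously, most likely through a quantitative Pesin block argument combined with accessibility-type considerations from stable ergodicity, is where the technical heart of the proof lies.
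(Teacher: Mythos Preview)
Your proposal has a genuine gap: the simplicity criterion (Theorem~\ref{theorem-criterium}) requires not only the \emph{pinching} condition you state (distinct-modulus eigenvalues of $Df^{n(p)}|_{E^{uu}}$) but also a \emph{twisting} condition on a homoclinic point $z\in W^u(p)\pitchfork W^s(p)$: the transition map $\psi^{\mathcal{F}}_{p,z}$, built from the stable and unstable holonomies of the cocycle $\mathcal{F}=Df|_{E^{uu}}$, must send every invariant subspace of $\mathcal{F}_p^{n(p)}$ transversely to its complementary invariant subspace (Definition~\ref{pinch.twist.nuh}). Pinching alone is insufficient; a cocycle can have a pinching periodic point and still fail to have simple spectrum if the holonomies respect a nontrivial invariant decomposition. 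In the paper the twisting is arranged by a separate $C^r$ perturbation supported near the homoclinic point $z$ (not near $p$), which modifies the holonomy composition as in Equation~\eqref{transition} without disturbing the periodic data; your proposal never addresses this, and the $su$-bunching you invoke only guarantees that the holonomies \emph{exist}, not that they twist.

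A secondary but real issue is your appeal to a Franks-type lemma in the $C^r$ topology for $r\geq2$: the classical Franks lemma is a $C^1$ tool and does not allow one to prescribe the eigenvalues of $Df^{n(p)}|_{E^{uu}}$ by a $C^r$-small perturbation localized at the orbit of $p$. The paper sidesteps this by first passing to a horseshoe $\Lambda\sim_h m$ and then invoking Theorem~3.2 of \cite{ACW}, which produces a $C^r$-nearby map for which the continuation $\Lambda_f$ carries a dominated splitting into one-dimensional subbundles; pinching at periodic points of $\Lambda_f$ then follows from domination. Your identification of the main remaining obstacle---the $C^2$-robustness of the homoclinic relation $p\sim_h m$---is correct and matches exactly what Section~6 (Proposition~\ref{prop.homocl} and Lemma~\ref{prop.uniform.pesin}) is devoted to.
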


Observe that the condition of the center Lyapunov exponents being continuous is satisfied when $\operatorname{dim}E^c=1$. As a consequence we have the following corollary.
\begin{coro}\label{cor-intro}
    Let $f:M\to M$ be a $C^r$, $r\geq2$, volume-preserving partially hyperbolic diffeomorphism with $\operatorname{dim}E^c=1$.  If $\int\log \|Df|E^c\|dm\neq0$,  then there exists $\delta>0$ such that if $c(f)<\delta$, then $f$ is $C^r$-approximated by $C^2$ open sets of ergodic volume-preserving diffeomorphisms with simple Lyapunov spectrum for the volume measure.
\end{coro}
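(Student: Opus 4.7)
The plan is to reduce the corollary to Theorem~\ref{theo-intro} by producing, arbitrarily $C^r$-close to $f$, a diffeomorphism $g\in\mathcal{A}$ that satisfies the $su$-bunching hypothesis of that theorem. Since ``being $C^r$-approximated by $C^2$-open sets of ergodic, simple-spectrum maps'' is clearly transitive under $C^r$-approximation, the conclusion for $f$ will follow.

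First I would observe that when $\dim E^c=1$ the condition ``center Lyapunov exponents vary continuously with respect to $f$'' in the definition of $\mathcal{A}$ is automatic: for any ergodic volume-preserving $h$ near $f$, the unique center exponent equals $\int\log\|Dh|E^c_h\|\,dm$, and this integral depends continuously on $h$ in the $C^1$-topology because $E^c_h$ does. In particular, the hypothesis $\int\log\|Df|E^c\|\,dm\neq 0$ persists under small $C^r$-perturbations.

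Next I would produce a stably ergodic $g$ arbitrarily $C^r$-close to $f$. For volume-preserving partially hyperbolic diffeomorphisms with one-dimensional center, accessibility is $C^r$-dense (by Rodriguez~Hertz--Rodriguez~Hertz--Ures), and once $c(f)$ is small enough to provide the center-bunching condition of Burns--Wilkinson, accessibility upgrades to stable ergodicity. The resulting $g$ remains non-uniformly hyperbolic: the strong exponents are uniformly bounded away from zero by partial hyperbolicity, and the center exponent of $g$ equals the (nonzero) integral by ergodicity. Together with the observation of the previous paragraph, this gives $g\in\mathcal{A}$.

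Finally, to verify the $su$-bunching of $g$, I would use that the strong bundles of a partially hyperbolic diffeomorphism are $\alpha$-H\"older with $\alpha>0$ uniform on a $C^1$-neighborhood of $f$ (depending only on the partial hyperbolicity rates), and that the hyperbolicity constant $\chi$ for $g$ is uniformly bounded below on such a neighborhood (strong exponents from partial hyperbolicity, center exponent from the integral hypothesis). Choosing $\delta$ strictly smaller than $\alpha\chi/2$ and using upper semicontinuity of $c(\cdot)$, the inequalities $c_u(g),c_s(g)<\alpha\chi/2$ then follow from $c(f)<\delta$. Applying Theorem~\ref{theo-intro} to $g$ produces $C^r$-approximation of $g$ by $C^2$-open sets with simple spectrum along $E^{uu}\oplus E^{ss}$, and since $\dim E^c=1$ this upgrades to simplicity of the full Lyapunov spectrum by partial hyperbolicity. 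The main obstacle I anticipate is the perturbation step: one must simultaneously secure stable ergodicity for $g$, preserve the nonzero integral and the quantitative bound $c(g)<\delta$, and verify that $c(f)<\delta$ really does imply the center-bunching hypothesis invoked in the stable-ergodicity theorem.
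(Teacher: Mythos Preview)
Your proposal is correct and follows essentially the same route as the paper: continuity of the single center exponent, a $C^r$-perturbation to a stably ergodic $g$ via \cite{RHRHU}, verification that $g$ is $su$-bunched once $c(f)<\delta$, and then Theorem~\ref{theo-intro}. One small simplification: for $\dim E^c=1$ the center-bunching hypothesis of Burns--Wilkinson is automatic (take $\gamma\widehat{\gamma}$ as close to $1$ as you like), so you do not need $c(f)<\delta$ for that step, and the ``main obstacle'' you flag about $c(f)<\delta$ implying center bunching is not actually an obstacle; $\delta$ is chosen purely to enforce the $su$-bunching inequality $c(g)<\alpha\chi/2$.
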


In Corollary~\ref{theorem-main} and \ref{standard}, we give applications of Theorem \ref{theo-intro} for partially hyperbolic maps with $2$-dimensional center bundle. See next section for the precise statement.

The second class we study are partially hyperbolic diffeomorphisms with a gap condition between topological entropy and stable/unstable entropy and the measures considered are the measures of maximal entropy. The hyperbolicity and finiteness of measures of maximal entropy for these diffeomoprhisms was proved, for the case of one dimensional center bundle by Mongez-Pacifico~\cite{MP1,MP2}, and for $2$-dimensional center bundle by Mongez-Pacifico-Poletti~\cite{MPP}. See Theorem~\ref{theorem-mme-simple} for the statement.

As an application, we can find open sets of Derived from Anosov diffeomorphisms on the torus with simple spectrum for every ergodic MME. 
Let $A\in \GL$, $d\geq 4$ be a hyperbolic matrix with eigenvalues $\lambda_1>\lambda_2>0>\lambda_3\geq \cdots \geq \lambda_d$. 
\begin{ltheorem}\label{theo-intro-DA}
   Let $f:\mathbb{T}^d\to \mathbb{T}^d$ be a $C^r$, $r\geq2$, partially hyperbolic diffeomorphism, with dominated splitting $E^u\oplus E^c\oplus E^s$, $\operatorname{dim} E^u=\operatorname{dim} E^c=1$, isotopic to $A$ along a path of partially hyperbolic diffeomorphisms. Then, there exists $\delta>0$ such that if $c(f)<\delta$, $f$ is $C^r$ accumulated by $C^2$ open sets of diffeomorphisms with simple spectrum for every ergodic measure of maximal entropy.
\end{ltheorem}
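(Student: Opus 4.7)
The plan is to deduce Theorem~\ref{theo-intro-DA} from the abstract criterion Theorem~\ref{theorem-mme-simple}, which handles partially hyperbolic diffeomorphisms enjoying a gap between the topological entropy and the stable/unstable topological entropies. Everything hinges on verifying, for $\delta$ small, that any DA map in the hypothesis satisfies the required entropy gap.

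First I would pin down $h_{top}(f)$ from below. Since $f$ is isotopic to $A$ through partially hyperbolic diffeomorphisms, the Franks semi-conjugacy $\pi\colon\mathbb{T}^d\to\mathbb{T}^d$ with $\pi\circ f = A\circ\pi$ exists; standard derived-from-Anosov technology (Franks--Manning together with Hammerlindl--Ures type arguments for the structure of $\pi$) then gives
$$h_{top}(f) \;\geq\; h_{top}(A) \;=\; \log\lambda_1 + \log\lambda_2.$$

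Next I would bound the strong unstable and strong stable topological entropies. Because $\dim E^u=1$, the asymptotic $u$-conformality $c_u(f)$ vanishes and $h^u_{top}(f)$ reduces to the single exponential growth rate of $\|Df^n|_{E^u}\|$; the isotopy to $A$ and partial hyperbolicity of the path force this rate to stay close to $\log\lambda_1$ and in particular strictly below $\log\lambda_1+\log\lambda_2$. For the stable side, the bunching $c_s(f)<\delta$ forces all exponential rates along $E^s$ to be within $\delta$ of one another, so $h^s_{top}(f)$ is close to $(d-2)$ times the average stable growth rate inherited from the isotopy class of $A$. Taking $\delta$ small and combining with the lower bound on $h_{top}(f)$ provided by the semi-conjugacy yields the gap $\max\{h^u_{top}(f),h^s_{top}(f)\} < h_{top}(f)$.

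Once the entropy gap holds, Theorem~\ref{theorem-mme-simple} applies: there are finitely many ergodic MMEs, each hyperbolic with center index matching that of $A$, and the homoclinic simplicity criterion announced in the abstract becomes available. A $C^r$-small perturbation then supplies, for each such MME, a periodic point homoclinically related to it and witnessing the non-degeneracy conditions of the criterion. Both conditions are $C^2$-open, yielding the $C^2$-open set of diffeomorphisms with simple spectrum for every ergodic MME.

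I expect the main obstacle to be making the stable-side entropy estimate effective enough in $\delta$. Since $\dim E^s=d-2$ can be large, the $s$-bunching alone does not separately control all $d-2$ exponents; one must combine it with the fact that fibers of $\pi$ are stable-saturated together with Ruelle's inequality on each ergodic MME to extract a single $\delta$ that yields the gap uniformly across all MMEs, which exist only finitely many by Mongez--Pacifico and Mongez--Pacifico--Poletti.
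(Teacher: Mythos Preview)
Your global plan—reduce to Theorem~\ref{theorem-mme-simple} by checking the entropy gap and the bunching condition—is exactly what the paper does. However, two of your key steps are incorrect.

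First, you misidentify which of the two cases applies. Since $\dim E^c=1$, the relevant open set is $\mathcal{V}_1$, not $\mathcal{V}_2$; the only entropy inequality needed is $h^u(f)<h_{top}(f)$, and there is no need to control $h^s(f)$ at all. The role of $\delta$ in the statement is \emph{not} to squeeze the stable exponents close together so as to bound $h^s(f)$; it is to force the bunching inequality \eqref{eq.condition1} once the gap $h_{top}(f)-h^u(f)>0$ has been established. So your last paragraph worries about the wrong obstacle.

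Second, and more seriously, your argument for $h^u(f)<h_{top}(f)$ does not work. You claim that because $\dim E^u=1$ the unstable topological entropy ``reduces to the single exponential growth rate of $\|Df^n|_{E^u}\|$'' and that the isotopy to $A$ forces this rate to be close to $\log\lambda_1$. Neither assertion is justified: $h^u(f)$ is a topological invariant of the strong unstable lamination, not the uniform expansion rate $\lim\tfrac{1}{n}\log\max_x\|Df^n_x|_{E^u}\|$, and in any case the partial hyperbolic isotopy to $A$ does not pin the latter near $\log\lambda_1$ (it can drift substantially under isotopy). The paper instead invokes \cite{FPS} to get that the strong unstable leaves lift quasi-isometrically to the universal cover, and then the argument of \cite[Theorem~B]{MPP} (see also \cite{MP2}) yields $h^u(f)<h_{top}(f)$ directly from this geometric property and the semiconjugacy to $A$. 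That is the missing ingredient in your proposal.
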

%We also have similar results for $2$ dimensional center, see Theorem~\ref{theorem-mme-simple}.

The main theorems are proved using a new criterion for simplicity of the Lyapunov spectrum of linear cocycles 
over diffeomorphisms.  Recall that the results of  \cite{GR,GM} are in the random setting, where matrices are chosen randomly with respect to some probability measure on some group of matrices.  In order to prove the simplicity of the spectrum in this setting, they establish a criterion where the group generated by the matrices on the support of the driving measure is irreducible and has a contraction property. 

In \cite{BV2}, they prove a criterion for simplicity of the spectrum of linear cocycles with holonomies over finite Markov shifts when the invariant measure has a continuous local product structure. This condition has two parts similar to the ones for random product. The contraction property is replaced by \emph{pinching}, this means that it exists a periodic point in the support of the invariant measure such that the matrix has simple spectrum. The irreducibility is replaced by \emph{twisting}, that is the periodic pinching point has a homoclinic intersection such that the composition of the holonomies ``mix" the eigenspaces.

The pinching and twisting conditions were extended when the base map is a shift of countably many symbols in \cite{AV} and this criterion was used by Avila-Viana~\cite{AV1} to prove the Zorich-Kontsevich conjecture which states that the Teichm\"uller flow on the moduli space of Abelian differentials on compact Riemann surfaces has simple Lyapunov spectrum. 

In our criterion, Theorem~\ref{theorem-criterium}, the base dynamic is not a shift, it is a non-uniformly hyperbolic diffeomorphism $f$ and the invariant measure $\mu$ is an ergodic equilibrium state. We have similar \emph{pinching} and \emph{twisting} conditions, on a hyperbolic periodic point that is homoclinically related to $\mu$.
The main novelty of this criterion is that it is not in terms of a fixed base dynamics, this allows us to do local perturbations on periodic points and homoclinic intersections.

In order to prove this result, we use \cite{AV} criterion of simplicity over countable Markov shifts with Sarig~\cite{S}, Ovadia~\cite{O} symbolic dynamics and Buzzi-Crovisier-Sarig~\cite{BCS1} irreducibility on homoclinic classes. 

In the next section, we introduce some preliminaries and give the precise statement of the criterion.

\section{Preliminaries and Statements}\label{section.preli}

If $f\colon M\to M$ is a $C^1$ diffeomorphism of a compact manifold $M$ and $\mu$ is an $f$-invariant measure, then, by  Oseledets Theorem,  for $\mu$-almost every point $x\in M$, there exist $k(x)\in \mathbb{N}$, real numbers $\lambda_1(f,x)> \cdots > \lambda_{k(x)}(f,x)$ and a splitting $T_{x}M=E^{1}_x\oplus \cdots \oplus E^{k(x)}_x$ of the tangent bundle at $x$, all depending measurably on the point, such that
$$\lim\limits_{n\rightarrow \pm \infty} \frac{1}{n} \text{log} \left\|Df^{n}_x(v)\right\|= \lambda_j(f,x) \quad \text{for all} \; v\in E^j_x \setminus \{0\}. $$
The real numbers $\lambda_j(f,x)$ are the \emph{Lyapunov exponents} of $f$ at the point $x$. If $(f, \mu)$ is ergodic, then the functions $k(x)$ and $\lambda_j(f,x)$ are constants $\mu$-almost everywhere. 

\begin{defn} We say that a map $f$ has \emph{simple Lyapunov spectrum} for $\mu$ if every Lyapunov exponents has multiplicity one. That is, for $\mu$-almost every point, $\dim\, E^j_x=1$ for every $1\leq j\leq k(x)$. In particular, $k(x)=\dim\, M$.
\end{defn}

A $C^1$ diffeomorphism $f\colon M\to M$ is \textit{partially hyperbolic} if there exist a nontrivial splitting of the tangent bundle $$TM=E^{uu}\oplus E^{c}\oplus E^{ss}$$ invariant under the derivative map $Df$, a Riemannian metric $\left\| \cdot \right\|$ on $M$, and positive continuous functions $\nu$, $\widehat{\nu}$, $\gamma$, $\widehat{\gamma}$ with 
$$\nu < 1 < \widehat{\nu}^{-1} \quad  \text{and} \quad \nu< \gamma < \widehat{\gamma}^{-1}< \widehat{\nu}^{-1},$$ such that for any unit vector $v\in T_{p}M$, 
\begin{equation}\label{ph}
\begin{aligned}
&\left\| Df_{p}(v) \right\|< \nu(p) \quad \quad \text{if} \; v\in E^{ss}(p), \\
\gamma(p) < &\left\| Df_{p}(v) \right\|< \widehat{\gamma}(p)^{-1}\quad  \text{if} \; v\in E^{c}(p),\\
\widehat{\nu}(p)^{-1}< &\left\| Df_{p}(v) \right\| \quad \quad \quad \quad \quad \: \text{if} \; v\in E^{uu}(p).
\end{aligned}
\end{equation}

Partial hyperbolicity is a $C^1$ open condition, that is, any diffeomorphism sufficiently $C^1$-close to a partially hyperbolic diffeomorphism is itself partially hyperbolic. Moreover, if $f\colon M\to M$ is partially hyperbolic, then the stable and unstable bundles $E^{ss}$ and $E^{uu}$ are uniquely integrable and their integral manifolds form two (continuous) foliations $W^{ss}$ and $W^{uu}$, whose leaves are immersed submanifolds of the same class of differentiability as $f$. 
%These foliations are called the \textit{strong-stable} and \textit{strong-unstable} foliations. They are invariant under $f$, in the sense that
%$$f(W^{s}(x))=W^{s}(f(x)) \qquad \text{and}\qquad f(W^{u}(x))=W^{u}(f(x)),$$ where $W^{s}(x)$ and $W^{u}(x)$ denote the leaves of $W^{s}$ and $W^{u}$, respectively, passing through any $x\in M$. 

For more information about partially hyperbolic diffeomorphisms we refer the reader to \cite{BDV,HPS,SH}.

\subsection{Volume-preserving maps}

In the following, we present a consequence of Theorem~\ref{theo-intro} about simplicity of the Lyapunov spectrum for the case of $2$-dimensional center bundle. 

\begin{coro}\label{theorem-main}
    Fix $r\geq 2$. There exists a non-empty open set $\mathcal{V}$ of $C^r$ volume-preserving partially hyperbolic diffeomorphisms with $2$-dimensional center bundle, such that $f\in \mathcal{V}$ is non-uniformly hyperbolic and if $f$ is $su$-bunched, then it can be $C^r$ approximated by $C^2$ open sets of ergodic volume-preserving diffeomorphisms with simple Lyapunov spectrum for the volume measure.
\end{coro}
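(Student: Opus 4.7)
The plan is to construct $\mathcal{V}$ as a non-empty open subset of the class $\mathcal{A}$ from Theorem~\ref{theo-intro}, specialized to diffeomorphisms with $2$-dimensional center bundle; then the corollary follows by applying Theorem~\ref{theo-intro} to each $su$-bunched $f\in\mathcal{V}$. The three conditions to verify on $\mathcal{V}$ are non-uniform hyperbolicity of $m$, stable ergodicity, and continuity of the center Lyapunov exponents.

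First, I would produce an initial open set of non-uniformly hyperbolic examples using Marin \cite{M} and Liang--Marin--Yang \cite{LMY}: in the class of volume-preserving partially hyperbolic diffeomorphisms with $2$-dimensional center, these works exhibit a $C^2$-open and $C^r$-dense set $\mathcal{U}_1$ on which the volume measure $m$ is hyperbolic. Next, I would cut $\mathcal{U}_1$ down to ensure stable ergodicity by enforcing accessibility, which is $C^1$-open and $C^r$-dense among partially hyperbolic diffeomorphisms (Dolgopyat--Wilkinson), and then combining accessibility with center bunching---automatic when $c(f)$ is small, in particular whenever $f$ is $su$-bunched---so that the Burns--Wilkinson stable ergodicity criterion applies. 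Call the resulting open set $\mathcal{U}_2\subset\mathcal{U}_1$.

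The main obstacle, and the third requirement for membership in $\mathcal{A}$, is the continuity of the center Lyapunov exponents. When $\dim E^c=2$, volume preservation together with the continuity of the unstable and stable Lyapunov exponents (these bundles are uniformly hyperbolic, so the exponents are continuous by standard Pesin-type estimates) reduces the problem to the continuity of the top exponent of the two-dimensional central cocycle $Df|_{E^c}$. I would appeal to the continuity theorems for Lyapunov exponents of fiber-bunched linear cocycles (in the spirit of Bocker--Viana, Malheiro--Viana, Backes--Brown--Butler) applied to $Df|_{E^c}$: these apply once $Df|_{E^c}$ satisfies a fiber-bunching condition, which can be arranged by shrinking to a further open subset where $c(f)$ is sufficiently small (which is compatible with, and implied by, $su$-bunching as $c(f)\to 0$). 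The resulting set $\mathcal{V}$ is open, is non-empty thanks to the $C^r$-density statements in \cite{M,LMY}, and by construction lies in $\mathcal{A}$.

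With $\mathcal{V}\subset\mathcal{A}$ in hand, any $f\in\mathcal{V}$ that is additionally $su$-bunched verifies every hypothesis of Theorem~\ref{theo-intro}, which delivers the required $C^r$-approximation of $f$ by $C^2$-open sets of ergodic volume-preserving diffeomorphisms with simple Lyapunov spectrum for $m$. The delicate point throughout is step three: guaranteeing continuity of the central exponents in a neighborhood in $\mathcal{V}$, rather than only at $f$, is what forces the final restriction on $\mathcal{V}$ and ultimately determines how large an open set one can extract.
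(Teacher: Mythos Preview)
Your overall strategy matches the paper's: construct a non-empty open $\mathcal{V}\subset\mathcal{A}$ and then invoke Theorem~\ref{theo-intro}. The divergence is in how you secure the three defining properties of $\mathcal{A}$, and your treatment of the third one contains a genuine gap.

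For the continuity of the center Lyapunov exponents you invoke continuity results ``in the spirit of Bocker--Viana, Malheiro--Viana, Backes--Brown--Butler'' for the cocycle $Df|_{E^c}$. These theorems, however, concern perturbations of the cocycle over a \emph{fixed} base dynamics; here the diffeomorphism itself varies, so both the base $(f,m)$ and the cocycle $Df|_{E^c}$ move simultaneously, and those results do not apply as stated. The paper sidesteps this entirely: the works \cite{LMY,M} (in the symplectic setting, starting from an accessible, center-bunched example with a periodic point as in \cite{SW}) already produce a $C^2$-open set on which every map is \emph{both} non-uniformly hyperbolic \emph{and} a continuity point for the center Lyapunov exponents, with $\lambda^c_1\neq\lambda^c_2$. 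So continuity of the center exponents is not established separately; it comes packaged with the non-uniform hyperbolicity from \cite{LMY,M}.

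Two smaller issues. First, accessibility is not $C^1$-open in general; Dolgopyat--Wilkinson gives density, and the paper uses the openness result of Avila--Viana \cite{AV2} specific to $2$-dimensional center. Second, center bunching (the condition $\nu<\gamma\widehat{\gamma}$, $\widehat{\nu}<\gamma\widehat{\gamma}$) is unrelated to smallness of $c(f)$, which measures non-conformality of $E^{uu}$ and $E^{ss}$; it is not implied by $su$-bunching. The paper instead starts from a symplectic example that is already center bunched and uses that this condition is $C^1$-open.
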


Corollary \ref{theorem-main} is a consequence of Theorem \ref{theo-intro} combined with the results in  \cite{LMY, M}. A particular example where these arguments can be applied is the following. 

Let $f:\mathbb{T}^{2d} \longrightarrow \mathbb{T}^{2d}$ be a Anosov symplectic $C^r$ diffeomorphism such that $c(f)=0$ and $g_{\lambda}:\mathbb{T}^2 \longrightarrow \mathbb{T}^2 $ denote the \textit{standard map} on the $2$-torus defined by, $$g_{\lambda}(z,w)=(z+w,w+ \lambda \sin (2\pi(z+w))).$$ 
\begin{coro}\label{standard} If $\lambda$ is close enough to zero, then $f\times g_{\lambda}$ can be $C^r$-approximated by $C^2$ open sets of ergodic volume-preserving diffeomorphisms with simple Lyapunov spectrum for the volume measure.
\end{coro}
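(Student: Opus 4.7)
The plan is to deduce Corollary \ref{standard} from Corollary \ref{theorem-main}: I would verify that $f\times g_\lambda$, possibly after a small $C^r$ perturbation, lies in the open set $\mathcal{V}$ and is $su$-bunched, so that Corollary \ref{theorem-main} directly yields a $C^2$-open set of diffeomorphisms with simple spectrum arbitrarily $C^r$-close to $f\times g_\lambda$.

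The first step is to verify the structural hypotheses. For $\lambda$ small, $f\times g_\lambda$ is volume-preserving (in fact symplectic for the product symplectic form on $\mathbb{T}^{2d}\times\mathbb{T}^2$) and partially hyperbolic on $\mathbb{T}^{2d+2}$ with $2$-dimensional center: the splitting is $E^{uu}=E^u_f\oplus\{0\}$, $E^{ss}=E^s_f\oplus\{0\}$, $E^c=\{0\}\oplus T\mathbb{T}^2$. This persists because $Dg_\lambda$ converges uniformly to $Dg_0$ as $\lambda\to 0$, so its rates are eventually dominated by the Anosov rates of $Df$. Since the strong bundles are tangential to the first factor, $D(f\times g_\lambda)|_{E^{uu}}=Df|_{E^u_f}$ and analogously for the stable direction; hence $c_u(f\times g_\lambda)=c_u(f)=0$ and $c_s(f\times g_\lambda)=c_s(f)=0$, so $c(f\times g_\lambda)=0$. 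The $su$-bunching inequality is then automatic as soon as a positive Lyapunov exponent $\chi$ is available, and persists under small $C^1$-perturbations by continuity of the quantities involved.

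The crux is producing non-uniform hyperbolicity: for $\lambda$ near $0$ the standard map $g_\lambda$ has large KAM islands, so $f\times g_\lambda$ need not itself be hyperbolic. However, the results of \cite{M, LMY}, applied in the symplectic partially hyperbolic $2$-dimensional center class to which $f\times g_\lambda$ belongs, produce arbitrarily $C^r$-close to $f\times g_\lambda$ a $C^2$-open set of volume-preserving diffeomorphisms for which the volume measure is non-uniformly hyperbolic and stably ergodic, with center Lyapunov exponents varying continuously. These perturbations therefore lie in the class $\mathcal{A}$ of Theorem \ref{theo-intro}, are $su$-bunched by the previous paragraph, and Corollary \ref{theorem-main} applies. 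The main obstacle I anticipate is precisely the continuity of the center Lyapunov exponents --- in general only upper semicontinuous --- which in this setting is secured via the dominated splitting within the center produced by the mechanism of \cite{M, LMY}; arranging all the defining properties of $\mathcal{A}$ simultaneously on a single $C^2$-open set near $f\times g_\lambda$ is the substantive technical input supplied by those papers, and is what makes the reduction to Corollary \ref{theorem-main} go through.
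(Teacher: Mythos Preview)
Your overall strategy---reduce to Theorem~\ref{theo-intro} via the machinery of \cite{M,LMY}---is the right one, and your computation that $c(f\times g_\lambda)=0$ because the strong bundles are tangent to the first factor is correct. However, there is a genuine gap: you invoke \cite{M,LMY} as if they directly furnish, near $f\times g_\lambda$, a $C^2$-open set of maps that are simultaneously non-uniformly hyperbolic \emph{and} stably ergodic. They do not. Those papers produce non-zero center exponents, but stable ergodicity is obtained separately from accessibility together with center bunching via \cite{BW}. The product $f\times g_\lambda$ is \emph{not} accessible for any $\lambda$: since it is a genuine product, the strong leaves are of the form $W^{s/u}_f(x)\times\{y\}$, so the accessibility classes are the fibers $\mathbb{T}^{2d}\times\{y\}$. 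In particular $f\times g_\lambda$ is not even ergodic, and the hypotheses under which \cite{M,LMY} are applied (see the proof of Corollary~\ref{theorem-main}) are not met. You must first perturb to obtain accessibility; the paper does this using \cite{HS}, which supplies accessible symplectic skew-products over $f$ arbitrarily $C^r$-close to $f\times g_\lambda$ (and also provides the needed periodic point).

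A second, subtler point concerns $su$-bunching after perturbation. You argue that $c$ is small for maps $C^1$-close to $f\times g_\lambda$ by continuity, and then compare with the $\chi$ produced by \cite{M,LMY}. This is delicate because $\chi$ depends on the perturbation, while the required smallness of $c$ depends on $\chi$. The paper sidesteps this entirely: all the perturbations (from \cite{HS} and from \cite{M,LMY}) are performed \emph{within the class of skew-products over $f$}, and for any partially hyperbolic skew-product $g$ over $f$ one has $c(g)=c(f)=0$ exactly. This makes the $su$-bunching condition automatic regardless of the value of $\chi$, and is the key structural observation that makes the reduction to Theorem~\ref{theo-intro} clean.
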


\subsection{Measures of Maximal Entropy}

The symplicity of the Lyapunov spectrum of partially hyperbolic systems also holds when we study measures of maximal entropy. We consider two cases: the first, when the center dimension is equal to 1 and the other when the center dimension is 2. 

Let $r\geq 2$ and $f:M\to M$ be a $C^{r}$ partially hyperbolic diffeomorphism. Denote $h_{top}(f)$ the topological entropy of $f$ and $h^u(f)$ the topological unstable entropy of $f$, see \cite{MP1} for the definition. 

A measure of maximal entropy is an $f$-invariant measure $\mu$ which satisfies $h_\mu(f)=h_{top}(f)$. Here $h_\mu(f)$ denotes the metric entropy of $f$ with respect to $\mu$.

Suppose $\dim E^c=1$, Mongez and Pacifico \cite{MP1} proved that if $h_{top}(f)>h^u(f)$, then $f$ has a finite number of ergodic measures of maximal entropy. Let $\mathcal{V}_1$ be the set formed by $C^r$ partially hyperbolic diffeomorphisms satisfying the above condition and such that for $*=ss,uu$ and $\mathcal{F}_*=Df\vert_{E^{*}}$, 
\begin{equation}\label{eq.condition1}
\|\mathcal{F}_*\| \|\mathcal{F}_*^{-1}\|e^{\frac{\alpha[{h_{top}(f)-h^u(f)]}}{2}}<1,
\end{equation}
where $\alpha>0$ is the H\"older exponent of $x\mapsto E^*_x$. By Lemma 3.4 of \cite{MP1}, the subset $\mathcal{V}_1$ is $C^2$-open.

 Assume now that $\dim E^c=2$ and the center direction admits a dominated splitting $E^c=E_1\oplus E_2$ with $\dim E_1=\dim E_2=1$. Let $h^s(f)$ denote the stable topological entropy of $f$, defined by $h^s(f)=h^u(f^{-1})$. Mongez, Pacifico and the third author \cite{MPP} proved that if $h_{top}(f)>\max\{h^u(f),h^s(f)\}$, then $f$ also has a finite number of ergodic measures of maximal entropy. Let $\mathcal{V}_2$ be the set formed by $C^r$ partially hyperbolic diffeomorphisms satisfying the above condition and such that for $*=ss,uu$ and $\mathcal{F}_*=Df\vert_{E^{*}}$,  
 \begin{equation}\label{eq.condition2}
 \|\mathcal{F}_*\| \|\mathcal{F}_*^{-1}\|e^{\frac{\alpha [h_{top}(f)-\max\{h^u(f), h^s(f)\}]}{2}}<1,
 \end{equation}
 where $\alpha>0$ is the H\"older exponent of $x\mapsto E^*_x$. As before, $\mathcal{V}_2$ is also $C^2$-open, see the proof of Lemma 5.4 of \cite{MPP}.

Conditions \eqref{eq.condition1} and \eqref{eq.condition2} above are related to the value of $c(f)=\max\{c_u(f),c_s(f)\}$ defined in Equation (\ref{cu}).

\begin{ltheorem}\label{theorem-mme-simple}
    Fix $r\geq 2$. For $i=1,2$, let $\mathcal{V}_i$ be the open set defined above. Then, there exists a $C^2$ open and $C^r$ dense subset $\mathcal{U}_i\subset \mathcal{V}_i$ such that for every $f\in \mathcal{U}_i$, $f$ has simple Lyapunov spectrum for any ergodic measure of maximal entropy.
\end{ltheorem}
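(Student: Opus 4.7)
The plan is to apply the simplicity criterion stated as Theorem~\ref{theorem-criterium} to the restricted derivative cocycles $Df|_{E^{uu}}$ and $Df|_{E^{ss}}$, with respect to each ergodic measure of maximal entropy of $f$. For $f\in\mathcal{V}_i$, the results of Mongez--Pacifico \cite{MP1,MP2} (for $i=1$) and Mongez--Pacifico--Poletti \cite{MPP} (for $i=2$) produce finitely many ergodic MMEs $\mu_1,\ldots,\mu_N$, all non-uniformly hyperbolic, and the bunching-type conditions \eqref{eq.condition1}, \eqref{eq.condition2} are exactly what is needed so that $Df|_{E^{uu}}$ and $Df|_{E^{ss}}$ admit the H\"older stable/unstable holonomies required by the criterion. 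The center Lyapunov exponents are automatically distinct from the unstable and stable ones by sign (hyperbolicity of the MMEs), and in the case $i=2$ the two center exponents are distinct by the dominated splitting $E^c=E_1\oplus E_2$; so it suffices to establish simplicity of the spectra of $Dg|_{E^{uu}}$ and $Dg|_{E^{ss}}$ for each ergodic MME of $g$.

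To achieve this, for each $\mu_j$ I would first use the Sarig/Ovadia symbolic coding together with the Buzzi--Crovisier--Sarig irreducibility result to produce a hyperbolic periodic point $p_j$ homoclinically related to $\mu_j$. Then, via a $C^r$-small perturbation supported near the orbit of $p_j$, I would arrange that $Df^{\operatorname{per}(p_j)}$ restricted to $E^{uu}(p_j)$ and to $E^{ss}(p_j)$ has simple real spectrum (pinching). A further $C^r$-small perturbation supported near a transverse homoclinic intersection $q_j$ of $p_j$ would produce the twisting condition: the composition of stable and unstable holonomies along the homoclinic loop at $q_j$ does not leave invariant any proper sum of eigenspaces of the pinched periodic matrices. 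Since the $p_j$ are finite in number, these perturbations can be made disjointly supported and small enough so that the perturbed map remains in $\mathcal{V}_i$, which is $C^2$-open.

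For the $C^2$-openness of the resulting set $\mathcal{U}_i$, the key point is that the continuation of each $p_j$ as a hyperbolic periodic orbit, the persistence of the transverse homoclinic intersection $q_j$, and both the pinching and twisting conditions at $q_j$ are $C^2$-open; moreover, the homoclinic relation with the ergodic MMEs of the perturbed map persists under $C^2$-perturbations within $\mathcal{V}_i$. Combined with Theorem~\ref{theorem-criterium}, this gives simple spectrum for $Dg|_{E^{uu}}$ and $Dg|_{E^{ss}}$ with respect to every ergodic MME of every $g$ in a $C^2$-neighborhood of the perturbed maps; hence $\mathcal{U}_i$ is $C^2$-open, and by construction it is $C^r$-dense in $\mathcal{V}_i$.

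The main obstacle will be controlling the finitely many ergodic MMEs under $C^2$-perturbations within $\mathcal{V}_i$: I need to know that every ergodic MME of the perturbed map $g$ remains homoclinically related to the continuation of one of the constructed periodic points $p_j$, so that the criterion can be invoked uniformly on a $C^2$-neighborhood. This amounts to a quantitative stability/continuity of the homoclinic classes supporting the MMEs in $\mathcal{V}_i$, and is the most delicate step; it is where the precise statements extracted from \cite{MP1,MPP} and the symbolic-dynamics/irreducibility input from \cite{S,O,BCS1} have to be combined most carefully.
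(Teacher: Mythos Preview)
Your overall strategy matches the paper's proof: use the finiteness and hyperbolicity of the ergodic MMEs from \cite{MP1,MPP}, attach to each a periodic point homoclinically related to it, create pinching and twisting at these points by local $C^r$ perturbations, and invoke Theorem~\ref{theorem-criterium}; openness comes from persistence of pinching/twisting plus the stability of the homoclinic relation between the MMEs of nearby maps and the continued periodic points. You also correctly isolate the delicate step and the right references: the paper closes that step by citing \cite[Cor.~5.3]{MP1} and \cite[Lemma~5.6]{MPP}, which say precisely that every ergodic MME of any $g$ $C^2$-close to $f_0$ is homoclinically related to the continuation of one of the $p_j$.

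There is, however, a genuine gap in your pinching step. You claim that a $C^r$-small perturbation supported near the orbit of $p_j$ can force $Df^{\operatorname{per}(p_j)}|_{E^{uu}(p_j)}$ (and $|_{E^{ss}(p_j)}$) to have simple real spectrum. This is not true in general: such a perturbation changes each $Df_{f^i(p_j)}$ by a small amount, hence changes the product $Df^{\operatorname{per}(p_j)}|_{E^{uu}}$ only slightly; if that product has a pair of complex conjugate eigenvalues (which is an open condition and perfectly compatible with uniform expansion on $E^{uu}$), no small perturbation will make them real, and pinching in the sense of Definition~\ref{pinch.twist.nuh} requires all eigenvalues to be real with distinct absolute values. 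Choosing a different periodic point with large period does not help a priori, since you have no information yet on the Lyapunov spectrum of $Df|_{E^{uu}}$ for $\mu_j$ (that is what you are trying to prove). The paper handles this by first applying Katok's horseshoe theorem (with \cite[Thm.~2.12]{BCS1}) to get horseshoes $\Lambda_j\sim_h\upsilon_j$, and then invoking Theorem~3.2 of \cite{ACW}, which produces a $C^r$-small perturbation after which the continuation $\Lambda_j(f)$ carries a dominated splitting into one-dimensional sub-bundles. With that splitting in hand, any periodic point $p_j\in\Lambda_j(f)$ has $Df^{n(p_j)}|_{E^{uu}}$ with real eigenvalues, and a further small perturbation separates their absolute values. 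So your outline becomes correct once you replace ``perturb near the orbit of $p_j$'' by ``perturb to obtain a one-dimensional dominated splitting on a horseshoe homoclinically related to $\mu_j$'' via \cite{ACW}.
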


\subsection{Simplicity criterion} In order to conclude the results on the previous sections, we prove a criterion for the simplicity of the Lyapunov spectrum for linear cocycles over non-uniformly hyperbolic maps.

Let $f:M\to M$ be a $C^{2}$ diffeomorphism. We say that an $f$-invariant measure $\mu$ is \textit{hyperbolic} if the set of points with non-zero Lyapunov exponents has full measure. More precisely, given $\chi>0$, we say that an \emph{ergodic} measure $\mu$ is $\chi$-hyperbolic if $\chi<\min\{|\lambda_{i}(f)|: 1\leq i\leq k\}.$ 

A hyperbolic measure is called of \textit{saddle type} if for $\mu$-almost every $x\in M$, $\lambda_1(f, x) < 0 < \lambda_{k(x)}(f, x)$. In this case, by Pesin's Stable Manifold Theorem \cite{P}, $\mu$-almost every $x$ admit $C^r$ embedded manifolds $W^s_{loc}(x)$ and $W^u_{loc}(x)$ such that, $$\text{for every } y\in W^s_{loc}(x),\; \limsup_{n\to \infty} \frac{1}{n} \log d(f^n(x), f^n(y))<0,$$ and analogously for $W^u_{loc}(x)$ considering backwards iterates. For these points the global stable and unstable manifolds are defined by, $$W^s(x)=\bigcup_{n\geq 0} f^{-n}[W^s_{loc}(f^n(x)] \quad \text{ and } \quad W^u(x)=\bigcup_{n\geq 0} f^{n}[W^u_{loc}(f^{-n}(x)].$$ 

\begin{defn}\label{homoclinic.point}
Let $f$ be a $C^{2}$ diffeomorphism and $\mu$ an ergodic hyperbolic measure of saddle type. We say that a hyperbolic periodic point $p\in M$ is homoclinically related to $\mu$ and denoted $p\sim_h \mu$ if there exists a measurable set $X\subset M$, $\mu(X)>0$ such that $W^u(x)\pitchfork W^s(\mathcal{O}(p))\neq\emptyset$ and $W^s(x)\pitchfork W^u(\mathcal{O}(p))\neq\emptyset$ for any $x\in X$.  
Here $\mathcal{O}(p)$ is the orbit of $p$, and $W^*(\mathcal{O}(p))=\bigcup_{y\in \mathcal{O}(p)}W^*(y)$.
\end{defn}

The main result in this section is going to hold for two types of measures: ergodic equilibrium states of H\"older potentials and ergodic SRB measures. 

Given a continuous map $\varphi\colon M\to \mathbb{R}$, an $f$-invariant measure $\mu$ is an \emph{equilibrium state} for the potential $\varphi$ if $h_{\mu}(f)+\int \varphi d\mu=P(f, \varphi),$ where $P(f, \varphi)=\sup_{\upsilon\in \mathcal{M}_f} \{h_{\upsilon}(f) + \int \varphi d\upsilon\}$ and $\mathcal{M}_f$ denotes the subset of $f$-invariant measures. In particular, measures of maximal entropy are equilibrium states.

On the other hand, an ergodic measures $\mu$ is a \emph{SRB} if it verifies $h_{\mu}(f)=\sum_{\lambda_j(f)>0} \lambda_j(f)\dim E^j.$

Consider $\pi\colon \mathcal{E}\to M$ a vector bundle such that the fibers are isomorphic to $\mathbb{R}^d$ and ${(U, \phi_U)}_{U\in \mathcal{W}}$ a family of local charts for $\mathcal{E}$. For every $U, V \in \mathcal{W}$ such that $U\cap V$ is non-empty the map $(U\cap V)\times \mathbb{R}^d\to (U\cap V)\times \mathbb{R}^d$ is of the form $(x,\xi)\mapsto (x, g_x(\xi))$. Given $\alpha>0$ we say that the vector bundle $\pi\colon \mathcal{E}\to M$ is an ${\alpha}$-H\"older continuous vector bundle if for every family of local charts ${(U, \phi_U)}_{U\in \mathcal{W}}$ and every $U, V \in \mathcal{W}$ with non-empty intersection, the map $x\mapsto g_x$ defined on $U\cap V$ is $\alpha$-H\"older continuous. 

Let $\pi\colon \mathcal{E}\to M$ be an $\alpha$-H\"older continuous vector bundle and $f\colon M\to M$ a $C^2$ diffeomorphism. A \textit{linear cocycle} over $f$ is a continuous transformation $\mathcal{F}\colon \mathcal{E}\to \mathcal{E}$ satisfying $\pi \circ \mathcal{F}=f\circ \pi$ and acting by linear isomorphisms $\mathcal{F}_x\colon \mathcal{E}_x\to \mathcal{E}_{f(x)}$ on the fibers. We say that $\mathcal{F}$ is $\alpha$-H\"older continuous if its expression on local coordinates $(U\cap f^{-1}(V))\times \mathbb{R}^d\to V\times \mathbb{R}^d$, $(x,\xi)\mapsto (f(x), A(x)\xi)$ is such that $x\mapsto A(x)$ is $\alpha$-H\"older continuous. Observe that this condition does not depend on the choice of the local charts since $\pi\colon \mathcal{E}\to M$ is $\alpha$-H\"older continuous. 

%Fixed $f$ and $\mathcal{E}$, we denote $\mathcal{L}^{\alpha}(\mathcal{E}, f)$ the set of $\alpha$-H\"older continuous linear cocycles. $\mathcal{L}^{\alpha}(\mathcal{E}, f)$ is a vector space with the following natural norm: $$\left\|\mathcal{F}\right\|_{\alpha}=\sup_{U,V\in \mathcal{W}} \left( \sup_{x\in U\cap f^{-1}(V)} \left\|A(x)\right\| + \sup_{x\neq y} \frac{\left\|A(x)-A(y)\right\|}{d(x,y)^{\alpha}}\right).$$

Every $\alpha$-H\"older continuous map $A:M\to \GL$ defines an $\alpha$-H\"older continuous linear cocyle $\mathcal{F}_{A}\colon M\times \mathbb{R}^d\to  M\times \mathbb{R}^d$ by $(x,\xi)\mapsto (f(x), A(x)\xi)$. In this case, we identify $\mathcal{F}_{A}$ with $A$ and we call $A$ a linear cocycle.

Given a linear cocycle $\mathcal{F}$ and an $f$-invariant measure $\mu$, we can define the Lyapunov exponents of the cocycle $\mathcal{F}$ by applying Oseledets Theorem as before. That is, for $\mu$-almost every point $x\in M$, there exist $k(x)\in \mathbb{N}$, real numbers $\lambda_1(\mathcal{F},x)> \cdots > \lambda_{k(x)}(\mathcal{F},x)$ and a splitting $\mathbb{R}^d=E^{1}_x\oplus \cdots \oplus E^{k(x)}_x$, all depending measurably on the point, such that
$$\lim\limits_{n\rightarrow \pm \infty} \frac{1}{n} \text{log} \left\|\mathcal{F}^{n}_x(v)\right\|= \lambda_j(\mathcal{F},x) \quad \text{for all} \; v\in E^j_x \setminus \{0\}. $$

\begin{defn} We say that $\mathcal{F}$ has \emph{simple Lyapunov spectrum for $\mu$} if all its Lyapunov exponents has multiplicity one. 
\end{defn}

\begin{defn}\label{fb}
For $\chi>0$, $\mathcal{F}$ is said to be $\chi$-fiber bunched if $\mathcal{F}$ is an $\alpha$-H\"older continuous linear cocycle and there exist $C>0$ and $\lambda\in (0,1)$ such that $$\|\mathcal{F}^n_x\| \|(\mathcal{F}^n_x)^{-1}\|e^{-\chi\alpha|n|}<C\lambda^{|n|},$$ for every $x\in M$ and $n\in \mathbb{Z}$.
\end{defn}

Let $p$ be an $\chi$-hyperbolic $f$-periodic point with period $n(p)$ and $\mathcal{F}$ a $\chi$-fiber bunched cocycle. The fiber-bunched condition implies the existence of the following limits,
\begin{equation}\label{holonomies.diffeo} 
\begin{aligned}
H^s_{y,z}&=\lim_{n\to \infty} (\mathcal{F}_z^{n})^{-1} \circ \mathcal{F}_y^{n} \quad y,z\in W^s(p)\\
H^{u}_{y,z}&=\lim_{n\to \infty} (\mathcal{F}_z^{-n})^{-1}\circ \mathcal{F}_y^{-n}, \quad y,z\in W^u(p).
\end{aligned}
\end{equation}

Observe that the function compositions in the limits above are well-defined, because if $y,z\in W^s(p)$, then for $n$ large enough, $f^n(y)$ and $f^n(z)$ are sufficiently close to identify $\mathcal{E}_{f^n(y)}$ and $\mathcal{E}_{f^n(z)}$ through local charts. Analogous argument  for $y, z\in W^u(p)$. The limits do not depend on the identification that was chosen.

For $z\in W^u_{loc}(p)\pitchfork f^{-l}W^s_{loc}(p)$, the transition map $\psi^{\mathcal{F}}_{p, z}\colon \mathcal{E}_{p}\to \mathcal{E}_{p}$ is defined by \begin{equation}\label{transition.A} \psi^{\mathcal{F}}_{p, z}=H^{s}_{f^{l}(z),p}\circ \mathcal{F}_z^{l}\circ H^u_{p,z}.\end{equation}

\begin{defn}\label{pinch.twist.nuh} Let $\mathcal{F}$ be a $\chi$-fiber bunched cocycle,
\begin{enumerate}
    \item[(P)] A $\chi$-hyperbolic point $p$ is pinching if all eigenvalues of $\mathcal{F}_p^{n(p)}$ have distinct absolute values;
    \item[(T)] A point $z\in W^u_{loc}(p)\pitchfork f^{-l}W^s_{loc}(p)$ is twisting with respect to $p$ if for any invariant subspaces (sums of eigenspaces) $E$ and $F$ of $\mathcal{F}_p^{n(p)}$ with $\text{dim}(E)+\text{dim}(F)=d$, it holds $\psi^{\mathcal{F}}_{p,z}(E)\cap F=\{\emptyset\}.$
\end{enumerate}
\end{defn}

\begin{ltheorem}\label{theorem-criterium}
Let $f:M\to M$ be $C^{2}$ diffeomorphism, $\mathcal{F}:\mathcal{E}\to \mathcal{E}$ be $\chi/2$-fiber bunched linear cocycle and $\mu$ be an ergodic $\chi$-hyperbolic measure of saddle type which is an equilibrium state of a H\"older potential, or an SRB measure. Then, if it exist a $\chi$-hyperbolic pinching periodic point $p\sim_h \mu$ and $z\in W^u(p)\pitchfork W^s(p)$ twisting with respect to $p$ then $\mathcal{F}$ has simple Lyapunov spectrum for $\mu$.
\end{ltheorem}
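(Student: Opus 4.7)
The plan is to reduce the criterion to the Avila--Viana simplicity theorem over countable Markov shifts by symbolically coding the non-uniformly hyperbolic system $(f,\mu)$ and transferring the pinching and twisting hypotheses to the symbolic side.

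First, I would apply the Sarig--Ovadia symbolic dynamics to construct a countable Markov shift $(\Sigma,\sigma)$ together with a finite-to-one Hölder factor map $\pi\colon\Sigma\to M$ that semiconjugates $\sigma$ to $f$ and captures the hyperbolic part of the dynamics. Since $\mu$ is either an ergodic equilibrium state of a Hölder potential or an ergodic SRB measure, and is $\chi$-hyperbolic of saddle type, it admits a lift $\hat\mu$ to $\Sigma$ which is an ergodic equilibrium state of the pulled-back potential (or an SRB-type measure) and has the same Lyapunov spectrum for the lifted cocycle $\hat{\mathcal F}=\mathcal F\circ\pi$. Because $\mathcal F$ is $\chi/2$-fiber bunched, the stable and unstable holonomies $H^{s}$ and $H^{u}$ from \eqref{holonomies.diffeo} exist on Pesin manifolds, and composing $\hat{\mathcal F}$ with them produces a Hölder cocycle over $\Sigma$ with well-defined holonomies in the sense required by the Avila--Viana setup.

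Second, I would invoke the Buzzi--Crovisier--Sarig irreducibility theorem: the homoclinic class of $p$ corresponds to a single irreducible component $\Sigma'\subset\Sigma$, and any ergodic hyperbolic measure homoclinically related to an orbit whose lift lies in $\Sigma'$ lifts into that same component. Since $p\sim_h\mu$ by hypothesis, both the lift $\hat p$ of (a power of) the periodic orbit of $p$ and the support of $\hat\mu$ live in the same irreducible component $\Sigma'$. The pinching condition (P) transfers directly: $\mathcal F_p^{n(p)}$ has simple spectrum iff its conjugate on the symbolic side does. The twisting point $z\in W^u(p)\pitchfork W^s(p)$ lifts to a homoclinic orbit $\hat z$ in $\Sigma'$; the transition map $\psi^{\mathcal F}_{p,z}$ defined in \eqref{transition.A} equals the composition of stable and unstable holonomies along $\hat z$ that appears in the Avila--Viana twisting condition, so (T) transfers as well. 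At that point, I apply the Avila--Viana simplicity criterion for fiber-bunched cocycles over countable Markov shifts carrying an equilibrium state with local product structure, obtaining that $\hat{\mathcal F}$ has simple spectrum for $\hat\mu$, hence $\mathcal F$ has simple spectrum for $\mu$.

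The main obstacle I foresee is the faithful transfer of the hypotheses across the coding. On the one hand, one must check that the Sarig--Ovadia coding does apply to the measures we consider (hyperbolic equilibrium states of Hölder potentials and hyperbolic SRB measures both fall within the scope of Sarig's work and its extensions, but one must be careful that $\hat\mu$ inherits the local product structure and the positive weight on $\Sigma'$). On the other hand, matching the \emph{symbolic} twisting condition of Avila--Viana with the \emph{geometric} twisting condition of Definition~\ref{pinch.twist.nuh} requires identifying the holonomy-corrected cocycle on $\Sigma$ with the transition map $\psi^{\mathcal F}_{p,z}$ up to conjugation by the unstable holonomy at $p$, and showing that the genericity in the Avila--Viana criterion (taking iterates and intersections) is compatible with the single homoclinic orbit we are given. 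Once these identifications are made rigorous, the proof reduces to a direct application of the established symbolic criterion; the novelty of the statement lies in isolating the pinching/twisting data to a \emph{single} periodic orbit and a \emph{single} homoclinic point, which is exactly what makes it useful for local perturbations in the proofs of Theorems~\ref{theo-intro} and~\ref{theo-intro-DA}.
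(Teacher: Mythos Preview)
Your proposal is correct and follows essentially the same route as the paper: code $(f,\mu)$ by a countable Markov shift via Sarig--Ovadia, use Buzzi--Crovisier--Sarig to land in a single irreducible component containing lifts of both $p$ and the homoclinic point $z$, lift the cocycle (which remains fiber-bunched and hence carries holonomies), check that the symbolic transition map is conjugate to $\psi^{\mathcal F}_{p,z}$, and conclude by the Avila--Viana criterion. The two points you flag as obstacles are exactly the ones the paper handles by citing outside results: the \emph{continuous} local product structure of $\hat\mu$ is taken from \cite{BPV}, and the fact that $\hat\mu$ has full support on the irreducible component (so that the lifted periodic and homoclinic points lie in $\operatorname{supp}\hat\mu$, as required by Definition~\ref{pinch.twist.}) comes from \cite{BS}; once you invoke these, your sketch becomes a complete proof.
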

\begin{rem}
    This theorem can be generalized for maps $f:M\to M$ satisfying \emph{(A1)-(A7)} of \cite{LOP}, taking the pinching and twisting points in the inverse limit. 
    
    The criterion stated in Theorem \ref{theorem-criterium} also holds for $C^{1+\beta}$ maps for every $\beta>0$. Therefore, the theorems and corollaries presented in the previous sections can also be extended to the $C^{1+\beta}$-topology. 

    Moreover, the $C^2$-open neighborhood in the statement of the mentioned results can be replaced by a $C^1$-open neighborhood of $C^2$ diffeomorphism with bounded $C^2$-norm. This condition is necessary to guarantee the continuity of the holonomies as functions of the map. In the case of $C^{1+\beta}$ diffeomorphism, it is enough to ask for a $C^1$ neighborhood with uniform H\"older constant. 
\end{rem}

%Let $m$ denote a probability measure in the Lebesgue class and $\mathit{Diff}^r_{m}(M)$ the set of volume-preserving $C^r$ diffeomorphisms. 
%Let $f\in \mathit{Diff}^r_{m}(M)$ and $\mathcal{F}\colon \mathcal{E}\to \mathcal{E}$ be a $\alpha$-H\"older continuous linear cocycle over $f$. If $m$ is ergodic for $f$, we denote $$\widehat{\lambda}_1(\mathcal{F},m)> \cdots > \widehat{\lambda}_{\dim M}(\mathcal{F},m),$$ the Lyapunov exponents of $\mathcal{F}$ repeated according to their multiplicity.
%\begin{defn} We say that $\mathcal{F}$ is a continuity point for the Lyapunov exponents if the functions $$\mathcal{L}^{\alpha}(\mathcal{E}, f)\ni \mathcal{G}\mapsto \widehat{\lambda}_j(\mathcal{G},m),$$ are continuous at $\mathcal{F}$ for every $1\leq j\leq \dim M$.
%\end{defn}
%\begin{ltheorem}\label{theorem-continuity}
   % If all the hypotheses of Theorem \ref{theorem-criterium} are satisfied for $f$, $m$ and $\mathcal{F}$, then $\mathcal{F}$ is a continuity point for the Lyapunov exponents. 
%\end{ltheorem}

\section{Simple linear cocycles}

Let $\mathcal{G}$ be a directed graph with a countable vertex set $\mathcal{R}$ such that every vertex has at least one incoming and one outgoing edge. The topological Markov shift associated to $\mathcal{G}$ is the pair $(\hat{\Sigma}, \hat{\sigma})$ where $$\hat{\Sigma}=\{\underline{R}=\{R_i\}_{i\in \mathbb{Z}}\in \mathcal{R}^{\mathbb{Z}} : R_i\rightarrow R_{i+1}, \forall\; i \in \mathbb{Z}\},$$ and $\hat{\sigma}\colon \hat{\Sigma}\to \hat{\Sigma}$ is the left shift, $\hat{\sigma}(\{R_i\}_{i\in \mathbb{Z}})=\{R_{i+1}\}_{i\in \mathbb{Z}}$.

Given $\chi>0$, define a metric on $\hat{\Sigma}$ by \begin{equation}\label{distance}d(\{R_i\}_{i\in \mathbb{Z}}, \{S_i\}_{i\in \mathbb{Z}})=\exp\left(-\frac{\chi}{2}\min{|n|: R_n\neq S_n}\right).\end{equation} With this metric, $\hat{\Sigma}$ is a complete separable metric space and $\hat{\sigma}$ is a hyperbolic homeomorphism.

For every $\underline{R}\in \hat{\Sigma}$ the local stable and unstable sets are given by
\begin{equation*}
\begin{aligned}
W^s_{loc}(\underline{R})=&\{(S_n)_{n\in\mathbb{Z}}\in \hat{\Sigma}: \ S_n=R_n \text{ with } n\geq 0\},\\ 
W^u_{loc}(\underline{R})=&\{(S_n)_{n\in\mathbb{Z}}\in \hat{\Sigma}: \ S_n=R_n \text{ with } n\leq 0\}.
\end{aligned}
\end{equation*}

Observe that if $\underline{S}\in W^s_{loc}(\underline{R})$, then $d(\hat{\sigma}(\underline{R}),\hat{\sigma}(\underline{S}))\leq e^{-\frac{\chi}{2}}d(\underline{R}, \underline{S})$ and if $\underline{S}\in W^u_{loc}(\underline{R})$, then $d(\hat{\sigma}^{-1}(\underline{R}),\hat{\sigma}^{-1}(\underline{S}))\leq e^{-\frac{\chi}{2}}d(\underline{R}, \underline{S})$.

Consider $$\hat{\Sigma}^{+}=\left\{\{R_i\}_{i\geq 0}: \text{ there exists } \{S_i\}_{i\in \mathbb{Z}} \text{ such that } \{R_i\}_{i\geq 0}=\{S_i\}_{i\geq 0}\right\},$$
$$\hat{\Sigma}^{-}=\left\{\{R_i\}_{i\leq 0}: \text{ there exists } \{S_i\}_{i\in \mathbb{Z}} \text{ such that } \{R_i\}_{i\leq 0}=\{S_i\}_{i\leq 0}\right\}.$$

We denote by $P^{+}\colon\hat{\Sigma}\to \hat{\Sigma}^{+}$ and $P^{-}\colon\hat{\Sigma}\to \hat{\Sigma}^{-}$ the projections obtained by dropping the negative and the positive coordinates, respectively, of a sequence in $\hat{\Sigma}$. %Points in $\hat{\Sigma}^{+}$ will be denoted by $\underline{R}^{+}$ and points in $\hat{\Sigma}^{-}$ will be denoted by $\underline{R}^{-}$. 

Given a symbol $R\in \mathcal{R}$, define the cylinder $$[R]=\left\{\{S_i\}_{i\in \mathbb{Z}}\in \hat{\Sigma}: S_0=R\right\}.$$

A $\hat{\sigma}$-invariant probability measure $\hat{\mu}$ is said to have local product structure if the normalized restriction of $\hat{\mu}$ to the cylinder $[R]$ has the form $\hat{\mu}\vert [R]=\hat{\rho}\times (\hat{\mu}^{+}\times \hat{\mu}^{-}),$ where $\hat{\mu}^{+}=P^{+}_{*}\hat{\mu}$, $\hat{\mu}^{-}=P^{-}_{*}\hat{\mu}$ and the density $\hat{\rho}$ is measurable. Moreover, we say that $\hat{\mu}$ has \emph{continuous local product structure} if the density $\hat{\rho}\colon \hat{\Sigma}\to \mathbb{R}_{+}$ is uniformly continuous and bounded away from zero and infinity. 

As before, a continuous function $\hat{A}\colon \hat{\Sigma}\to \text{GL}(d, \mathbb{R})$ defines a linear cocycle over $(\hat{\Sigma}, \hat{\sigma})$ and as a consequence of the Theorem of Oseledets, given a $\hat{\sigma}$-invariant measure $\hat{\mu}$, we can define the Lyapunov exponents of the cocycle $\hat{A}$. 

\begin{defn}\label{def.holonomies} Let $\hat{A}$ be a continuous linear cocycle over $(\hat{\Sigma}, \hat{\sigma})$.

A \emph{stable holonomy} for $\hat{A}$ is a collection of linear isomorphisms $H^{s, \hat{A}}_{\underline{R},\underline{S}}\colon \mathbb{R}^d\to \mathbb{R}^d $, defined for every $\underline{R},\underline{S}$ in the same local stable set, which satisfy the following properties,
\vspace{0.2cm}

\begin{enumerate}[label=\emph{(\alph*)}]
\item $H^{s, \hat{A}}_{\underline{U},\underline{S}}\circ H^{s, \hat{A}}_{\underline{R},\underline{U}}= H^{s, \hat{A}}_{\underline{R},\underline{S}}$ and $H^{s, \hat{A}}_{\underline{R},\underline{R}}=Id$;
\vspace{0.2cm}

\item $H^{s, \hat{A}}_{\hat{\sigma}(\underline{R}),\hat{\sigma}(\underline{S})}=\hat{A}({\underline{S}})\circ H^{s, \hat{A}}_{\underline{R},\underline{S}}\circ \hat{A}(\underline{R})^{-1}$; 
\vspace{0.2cm}

\item $(\underline{R},\underline{S})\mapsto H^{s, \hat{A}}_{\underline{R},\underline{S}}$ is continuous.
\end{enumerate}
\vspace{0.2cm}

A \emph{unstable holonomy} for $\hat{A}$ is defined analogously for points in the same local unstable set. We use the expression \emph{invariant holonomies} to refer to both stable and unstable holonomies.
\end{defn}

%We say that a $\alpha$-H\"older linear cocycle is $\frac{\xi}{2}$-bunched if there exist constants $\hat{C}>0$ and $\hat{\theta}\in (0,1)$ such that $$\|\hat{A}^n(\underline{R})\|\|\hat{A}^n(\underline{R})^{-1}\|(e^{-\frac{\xi}{2}})^{|n|\alpha}< \hat{C}\hat{\theta}^{|n|},$$ for every $\underline{R}\in \hat{\Sigma}$ and $n\in \mathbb{Z}.$ 

By \cite{BGV}, an $\alpha$-H\"older linear cocycle $\hat{A}\colon \hat{\Sigma}\to \text{GL}(d, \mathbb{R})$ which is $\frac{\chi}{2}$-bunched (Definition \ref{fb}), always admits invariant holonomies. In this case, \begin{equation}\label{holonomies} 
\begin{aligned}
H^{s, \hat{A}}_{\underline{R},\underline{S}}&=\lim_{n\to \infty} \hat{A}^n(\underline{S})^{-1}\circ \hat{A}^n(\underline{R}), \quad \underline{S}\in W^s_{loc}(\underline{R})\\
H^{u, \hat{A}}_{\underline{R},\underline{S}}&=\lim_{n\to \infty} \hat{A}^{-n}(\underline{S})^{-1}\circ \hat{A}^{-n}(\underline{R}), \quad \underline{S}\in W^u_{loc}(\underline{R}).
\end{aligned}
\end{equation} Moreover, there exists $L>0$ such that for $*\in\{s,u\}$, $$\|H^{*, \hat{A}}_{\underline{R},\underline{S}} - Id\|\leq L d(\underline{R}, \underline{S})^{\alpha}.$$

\subsection{Simplicity Criterion}

Let $\underline{P}$ be a $\hat{\sigma}$-periodic point with period $q\geq 1$. A point $\underline{U}\in W^u_{loc}(\underline{P})$ is called homoclinic if there exists some multiple of $q$, $l\geq 1$, such that $\hat{\sigma}^l\underline{U}\in W^s_{loc}(\underline{P})$. 

Suppose $\hat{A}$ is a linear cocycle admitting invariant holonomies. The transition map $\psi^{\hat{A}}_{\underline{P}, \underline{U}}\colon \mathbb{R}^{d}\to \mathbb{R}^{d}$ is defined by \begin{equation}\label{transition.cocycle} \psi^{\hat{A}}_{\underline{P}, \underline{U}}=H^{s, \hat{A}}_{\hat{\sigma}^{l}(\underline{U}),\underline{P}}\circ \hat{A}(\underline{U})^{l}\circ H^{u, \hat{A}}_{\underline{P},\underline{U}}.\end{equation}

\begin{defn}\label{pinch.twist.} Let $\hat{\mu}$ be a $\hat{\sigma}$-invariant measure. A linear cocycle $\hat{A}$ admitting invariant holonomies is \emph{simple for $\hat{\mu}$} if there exists a $\hat{\sigma}$-periodic point $\underline{P}$ and some homoclinic point $\underline{U}$, both in the support of $\hat{\mu}$, such that:
\begin{enumerate}
    \item all eigenvalues of $\hat{A}(\underline{P})^{q}$ have distinct absolute values;
    \item for any invariant subspaces (sums of eigenspaces) $E$ and $F$ of $\hat{A}(\underline{P})^{q}$ with $\text{dim}(E)+\text{dim}(F)=d$, it holds $\psi^{\hat{A}}_{\underline{P}, \underline{U}}(E)\cap F=\{\emptyset\}.$
\end{enumerate}
\end{defn}
Analogously to Definition \ref{pinch.twist.nuh}, property (1) is called pinching and property (2) is called twisting. 

By Theorem A of \cite{AV} (see also Appendix A.1), the following result holds.

\begin{theo}\label{simple cocycle}
    Let $(\hat{\Sigma}, \hat{\sigma})$ be a topological Markov shift and $\hat{\mu}$ a $\hat{\sigma}$-invariant ergodic measure with continuous local product structure. 
    
If the linear cocycle $\hat{A}$ is simple for $\hat{\mu}$, then $\hat{A}$ has simple Lyapunov spectrum. 
\end{theo}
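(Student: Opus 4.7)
The plan is to reduce Theorem~\ref{simple cocycle} to Theorem A of \cite{AV}, verifying that the setup of a countable topological Markov shift, an ergodic invariant measure with continuous local product structure, a linear cocycle admitting invariant holonomies, and the pinching/twisting hypotheses at a periodic point of $\hat{\sigma}$ are precisely those of the Avila–Viana criterion. Apart from possibly minor notational translation, the hypotheses are essentially identical; consequently the theorem is a direct application. For the benefit of the reader I would record the main steps of the Avila–Viana strategy in our notation.

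Assume for contradiction that the Lyapunov spectrum is not simple and let $1\leq k<d$ be such that $\lambda_k(\hat{A})>\lambda_{k+1}(\hat{A})$ fails at the strict level one wants to rule out; more generally, we work on the flag bundle. Lift $\hat{\mu}$ to a measure on the Grassmannian bundle $\hat{\Sigma}\times \mathrm{Gr}(k,\mathbb{R}^{d})$ via a stationary probability $\hat m$ for the action of $\hat{A}$. Using the continuous local product structure of $\hat{\mu}$ together with the invariant holonomies, the disintegration of $\hat m$ along local unstable plaques defines a \emph{$u$-state}: the conditional measures are invariant under $H^{u,\hat A}$ (here one uses the Ledrappier-type characterization of stationary measures in the hyperbolic setting, and the positivity of the density $\hat{\rho}$ to transfer invariance across the product). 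Symmetrically one produces an \emph{$s$-state} from $\hat{A}^{-1}$.

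Once $u$- and $s$-states are available, the two hypotheses enter. By \emph{pinching}, the matrix $\hat{A}(\underline{P})^{q}$ has $d$ distinct absolute values of eigenvalues, hence its fixed points on $\mathrm{Gr}(k,\mathbb{R}^{d})$ are finitely many and coincide with the sums of eigenspaces of total dimension $k$; consequently the conditional measure at $\underline{P}$ of any $u$-state is supported on this finite set. Applying the same conclusion to the $s$-state gives a second finite set of invariant subspaces. The \emph{twisting} at the homoclinic point $\underline{U}$ then says that the transition
\[
\psi^{\hat{A}}_{\underline{P},\underline{U}}=H^{s,\hat A}_{\hat{\sigma}^{l}\underline{U},\underline{P}}\circ \hat A(\underline{U})^{l}\circ H^{u,\hat A}_{\underline{P},\underline{U}}
\]
maps no $k$-dimensional invariant subspace $E$ into a $(d-k)$-dimensional invariant subspace $F$ in the sense of the twisting hypothesis. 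Since both $u$- and $s$-disintegrations must match under $\psi^{\hat A}_{\underline P,\underline U}$ (by combining the equivariance properties of $u$- and $s$-states along the homoclinic orbit segment joining $\underline{P}$ to itself through $\underline{U}$), one obtains that some invariant $k$-plane $E$ is mapped by $\psi^{\hat A}_{\underline P,\underline U}$ into the complementary eigenflag, contradicting (2) of Definition~\ref{pinch.twist.}. Iterating (or working directly on all flag bundles) upgrades to simple spectrum.

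The main obstacle is purely technical rather than conceptual: the shift $(\hat{\Sigma},\hat{\sigma})$ is not compact, so measurable selections of the Oseledets subspaces and the convergence arguments for conditional measures along unstable plaques must be handled carefully. This is precisely what is done in \cite{AV} and reviewed in Appendix~A.1, so in practice I would simply apply their statement; the verification above is included to exhibit how pinching and twisting, formulated in Definition~\ref{pinch.twist.}, enter the argument.
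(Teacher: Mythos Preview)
Your proposal is correct and matches the paper's approach exactly: the paper does not give a proof of Theorem~\ref{simple cocycle} but simply records it as a direct consequence of Theorem~A of \cite{AV} (with a reference to an appendix), and your reduction does the same. Your additional sketch of the Avila--Viana $u$-state/$s$-state mechanism is a reasonable outline, though note that in \cite{AV} the argument is not framed by contradiction: one shows directly that any $u$-state conditional at $\underline{P}$ is a Dirac on the most expanding $k$-plane (not merely supported on the finite set of invariant $k$-planes), and the twisting then forces $\lambda_k>\lambda_{k+1}$ via transversality of the Oseledets filtrations rather than by matching $u$- and $s$-disintegrations.
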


\section{Proof of Theorem \ref{theorem-criterium}}

The following result is a consequence of several theorems proved in \cite{BCS1, BPV, O, S} and it allow us to relate a diffeomorphism $f$ with the setting of topological Markov shifts explained in the previous section.

\begin{theo}\label{coding} Let $f:M\to M$, be $C^{2}$ diffeomorphism and $\mu$ be an ergodic $\chi$-hyperbolic measure which is an equilibrium state of a H\"older potential, or an SRB measure. Then, there exists a topological Markov shift $(\hat{\Sigma}_f, \hat{\sigma}_f)$ and a H\"older continuous map $\hat{\pi}_f\colon \hat{\Sigma}_f\to M$ such that $\hat{\pi}_f\circ \hat{\sigma}=f\circ \hat{\pi}_f$. Moreover:
\begin{enumerate}
    \item $\hat{\Sigma}_f$ is irreducible.
    \item There exists a $\hat{\sigma}_f$-invariant ergodic measure $\hat{\mu}_f$ with continuous local product structure such that $(\hat{\pi}_f)_{*}\hat{\mu}_f=\mu$.
    \item $\hat{\mu}_f$ has full support. 
\end{enumerate}
    \end{theo}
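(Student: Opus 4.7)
The plan is to assemble the statement from four known ingredients: the symbolic coding by countable Markov shifts due to Sarig and Ovadia \cite{S,O}, the irreducibility on homoclinic classes of Buzzi-Crovisier-Sarig \cite{BCS1}, the thermodynamic formalism for countable Markov shifts, and the verification of continuous local product structure carried out in \cite{BPV}.

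First I would apply \cite{S,O} to obtain a countable topological Markov shift $(\Sigma,\sigma)$ and a H\"older continuous factor map $\pi\colon\Sigma\to M$ satisfying $\pi\circ\sigma=f\circ\pi$ which codes the $\chi$-hyperbolic ergodic measures of $f$: the given $\mu$ admits a $\sigma$-invariant lift $\tilde\mu$ with $\pi_{*}\tilde\mu=\mu$, and $\pi$ is essentially injective on a $\tilde\mu$-full-measure set. Replacing $\tilde\mu$ by an ergodic component, I may assume it is ergodic. Then, by \cite{BCS1}, the support of $\tilde\mu$ is contained in a single irreducible component $\hat\Sigma_f\subset\Sigma$, which is precisely the content of irreducibility on homoclinic classes. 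Setting $\hat\sigma_f=\sigma|_{\hat\Sigma_f}$ and $\hat\mu_f=\tilde\mu|_{\hat\Sigma_f}$, item (1) is built in by construction. Item (3) follows because $\hat\mu_f$ is ergodic and $\hat\sigma_f$ is topologically transitive on $\hat\Sigma_f$: the support of $\hat\mu_f$ is a closed $\hat\sigma_f$-invariant set of positive measure, so irreducibility forces it to equal $\hat\Sigma_f$.

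The remaining, and most delicate, point is the continuous local product structure required in item (2). For equilibrium states of a H\"older potential $\varphi$, the lift $\hat\mu_f$ is an equilibrium state for the H\"older potential $\varphi\circ\pi$ on $\hat\Sigma_f$, and Sarig's thermodynamic formalism realizes it as a Gibbs measure; the Gibbs property yields a decomposition $\hat\mu_f|_{[R]}=\hat\rho\,(\hat\mu^{+}\times\hat\mu^{-})$ with $\hat\rho$ uniformly continuous and bounded away from $0$ and $\infty$. For SRB measures, the disintegration of $\mu$ along unstable Pesin manifolds is absolutely continuous with H\"older densities, and by the H\"older regularity of $\pi$ this translates into the analogous product decomposition on cylinders of $\hat\Sigma_f$; this is the verification performed in \cite{BPV}.

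The main obstacle is this last step. The countable Markov shift is not locally compact and the natural potentials on the symbolic level have unbounded variation across the vertex set, so one must work within Sarig's refined formalism for countable shifts to identify $\hat\mu_f$ as a Gibbs-type measure and to control the density $\hat\rho$ uniformly on each cylinder. In the SRB case one additionally needs to transport the H\"older regularity of the unstable Jacobian from $M$ back to $\hat\Sigma_f$ through $\pi$ without losing uniform bounds, which is the technical heart of the argument in \cite{BPV}.
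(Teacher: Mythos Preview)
Your overall strategy---Sarig/Ovadia coding, Buzzi--Crovisier--Sarig for irreducibility, and \cite{BPV} for the continuous local product structure---is essentially the paper's approach. However, your argument for item (3) contains a genuine error.

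You write that ``the support of $\hat\mu_f$ is a closed $\hat\sigma_f$-invariant set of positive measure, so irreducibility forces it to equal $\hat\Sigma_f$.'' This is false: irreducibility of a countable Markov shift amounts to topological transitivity, and transitive systems have plenty of proper closed invariant subsets (any periodic orbit, for instance). An ergodic measure on a transitive system need not have full support; the Dirac measure on a fixed point of the full $2$-shift is already a counterexample. Ergodicity plus irreducibility simply does not yield full support.

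The paper closes this gap differently: it invokes \cite[Theorem~1.2]{BS}. Once $\hat\mu_f$ is identified as an equilibrium state on the irreducible countable shift---for $\varphi\circ\hat\pi_f$ in the H\"older-potential case, or for the lifted geometric potential in the SRB case---the Buzzi--Sarig result on equilibrium measures for countable Markov shifts gives full support. This is not a soft topological consequence of irreducibility; it genuinely uses the thermodynamic characterization of $\hat\mu_f$. You should therefore establish the equilibrium-state property of the lift first (which you already do in your discussion of item (2)) and then deduce (3) from \cite{BS}, rather than from the transitivity argument you gave.
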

\begin{proof}
    The existence of the irreducible Markov shift that codifies $\mu$ almost every point is a consequence of \cite{BCS1}. The fact that $\hat{\mu}_f$ has continuous product structure comes from \cite{BPV} and having full support comes from the fact that $\hat{\mu}_f$ is an equilibrium state and \cite[Theorem~1.2]{BS}.
\end{proof}

\begin{rem}\label{lip} By considering the distance in Equation (\ref{distance}), we can assume that the map $\hat{\pi}_f$ is Lipschitz. See Remark 3.2 of \cite{BPV}.
    
\end{rem}

Let $f$ be a $C^2$ diffeomorphism and $\mu_1$ an $\mu_2$ be ergodic hyperbolic measures of saddle type. The notation $\mu_1\preceq \mu_2$ means that there are measurable sets $X_1$ and $X_2$ with $\mu_i(X_i)>0$ such that for all $(x_1, x_2)\in X_1\times X_2$,  $W^u(x_1)\pitchfork W^s(x_2)\neq\emptyset$. 

\begin{defn}
We say that $\mu_1$ and $\mu_2$ are homoclinically related if $\mu_1\preceq \mu_2$ and $\mu_2\preceq \mu_1$. We denote $\mu_1\sim_h \mu_2$.
\end{defn}

\begin{defn} A hyperbolic transitive set $\Lambda\subset M$ is homoclinically related to an ergodic hyperbolic measure of saddle type $\mu$ if for some $\nu$ ergodic measure of $f\vert \Lambda$, we have $\nu\sim_h \mu$. We write $\Lambda\sim_h \mu$.
\end{defn}

Observe that any two ergodic hyperbolic measures of $f\vert \Lambda$ are homoclinically related. In particular, any hyperbolic periodic point $p\in \Lambda$ and the measure supported on its orbit, are homoclinically related to $\Lambda$.

By Theorem 3.14 of \cite{BCS1}, if $f$ and $\mu$ are in the hypotheses of Theorem \ref{coding}, $\mu$ is of saddle type and $\Lambda\sim_h \mu$, then we can assume there exists a transitive invariant compact set $K\subset \hat{\Sigma}_{f}$ such that $\hat{\pi}_{f}(K)=\Lambda$. Here $(\hat{\Sigma}_{f}, \hat{\sigma}_{f})$ is the topological Markov shift given by Theorem \ref{coding}.

\begin{lemma}\label{cocycle-A} Consider $f$, $\mathcal{F}$ and $\mu$ as in the hypotheses of Theorem \ref{theorem-criterium}. Denote $(\hat{\Sigma}_f, \hat{\sigma}_f, \hat{\mu}_f)$ the topological Markov shift given by Theorem \ref{coding} for $f$. There exists a linear cocycle $\hat{A}_{\mathcal{F}}\colon \hat{\Sigma}_f\to \text{GL}(d, \mathbb{R})$ admitting invariant holonomies and such that the Lyapunov exponents of $\mathcal{F}$ for $\mu$ coincides with the Lyapunov exponents of $\hat{A}_{\mathcal{F}}$ for $\hat{\mu}_f$.
\end{lemma}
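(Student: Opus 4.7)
The plan is to construct $\hat{A}_{\mathcal{F}}$ by pulling back the cocycle $\mathcal{F}$ via the coding map $\hat{\pi}_f$, using carefully chosen local trivializations of the bundle $\mathcal{E}$ along the images of the cylinders of $\hat{\Sigma}_f$. Specifically, for each symbol $R$ in the (countable) alphabet $\mathcal{R}$, note that the image $\hat{\pi}_f([R])$ can be taken inside a trivializing chart $(U_R, \phi_{U_R})$ for $\mathcal{E}$ (after refining the coding so that cylinders have small enough image, which costs nothing since $\mathcal{R}$ is countable and $M$ is compact). Through the isomorphism $\phi_{U_R}\colon \mathcal{E}|_{U_R}\to U_R\times \mathbb{R}^d$, define $\hat{A}_{\mathcal{F}}(\underline{R})\colon \mathbb{R}^d\to \mathbb{R}^d$ as the matrix representing $\mathcal{F}_{\hat{\pi}_f(\underline{R})}\colon \mathcal{E}_{\hat{\pi}_f(\underline{R})}\to \mathcal{E}_{f(\hat{\pi}_f(\underline{R}))}$ read through $\phi_{U_{R_0}}$ at the source and $\phi_{U_{R_1}}$ at the target, where $\underline{R}=(R_i)_{i\in \mathbb{Z}}$. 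The intertwining $\hat{\pi}_f\circ\hat{\sigma}_f=f\circ\hat{\pi}_f$ makes this definition coherent as a cocycle over $\hat{\sigma}_f$.

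Next I would verify the regularity: since $\hat{\pi}_f$ is Lipschitz in the distance \eqref{distance} (Remark \ref{lip}), the trivializations are $\alpha$-Hölder, and $\mathcal{F}$ is $\alpha$-Hölder, the composition yields an $\alpha$-Hölder cocycle $\hat{A}_{\mathcal{F}}$ on each cylinder of $\hat{\Sigma}_f$, which is what is needed on local stable/unstable sets to apply the holonomy construction. For the fiber-bunching, one checks directly that $\|\hat{A}_{\mathcal{F}}^n(\underline{R})\|\|\hat{A}_{\mathcal{F}}^n(\underline{R})^{-1}\|$ equals $\|\mathcal{F}^n_{\hat{\pi}_f(\underline{R})}\|\|(\mathcal{F}^n_{\hat{\pi}_f(\underline{R})})^{-1}\|$ up to uniformly bounded multiplicative constants coming from the chart transitions; thus $\chi/2$-fiber bunching of $\mathcal{F}$ (Definition \ref{fb}) passes to $\hat{A}_{\mathcal{F}}$. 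By \cite{BGV}, then, $\hat{A}_{\mathcal{F}}$ admits stable and unstable holonomies given by the limit formulas \eqref{holonomies}.

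Finally, the equality of Lyapunov exponents follows because $(\hat{\pi}_f)_*\hat{\mu}_f=\mu$, so $\hat{\pi}_f$ is a measurable factor from $(\hat{\sigma}_f,\hat{\mu}_f)$ to $(f,\mu)$, and by construction $\hat{A}_{\mathcal{F}}^n(\underline{R})$ is the matrix representation of $\mathcal{F}^n_{\hat{\pi}_f(\underline{R})}$ in two fixed Hölder trivializations. Since the trivializations have norms uniformly comparable to $1$, the exponential growth rates $\tfrac{1}{n}\log\|\hat{A}_{\mathcal{F}}^n(\underline{R})v\|$ and $\tfrac{1}{n}\log\|\mathcal{F}^n_{\hat{\pi}_f(\underline{R})}\phi_{U_{R_0}}^{-1}v\|$ coincide for $\hat{\mu}_f$-a.e.\ $\underline{R}$, so the Oseledets filtrations correspond under $\phi$ and the spectra match.

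The main technical obstacle is ensuring that the local trivializations can be arranged compatibly with the countable Markov partition so that the resulting cocycle has the required H\"older regularity on local stable and unstable sets (not just measurably defined); this is handled by refining the coding so that each cylinder $[R]$ lies in a single trivializing chart and using that the change-of-chart maps are $\alpha$-H\"older on $M$. Once this is set up, the bunching estimates and Lyapunov-exponent comparison are essentially formal consequences of the factor map structure.
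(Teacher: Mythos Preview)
Your proposal is correct and follows essentially the same approach as the paper: refine the coding so that each cylinder maps into a trivializing chart, define $\hat{A}_{\mathcal{F}}$ by reading $\mathcal{F}_{\hat{\pi}_f(\underline{R})}$ through the source and target trivializations, and then observe that the Lipschitz property of $\hat{\pi}_f$ together with the $\alpha$-H\"older regularity and $\chi/2$-bunching of $\mathcal{F}$ transfer to $\hat{A}_{\mathcal{F}}$, yielding holonomies and matching Lyapunov spectra. The paper's proof is terser (it packages the trivializations into a single map $L$ and simply asserts the H\"older and bunching properties via Remark~\ref{lip}), but the content is the same.
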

\begin{proof}
 Refining the Markov representation, if necessary, we can assume that the image of each cylinder $[R]$ of $\hat{\Sigma}_f$ is contained inside of an open set on which the bundle $M\mapsto \mathcal{E}$ is trivializable. Then, we can extend the trivialization and define a continuous map $L\colon \hat{\Sigma}_f \times \mathbb{R}^d \to \mathcal{E}$ such that $L(\underline{R})\colon \mathbb{R}^d \to \mathcal{E}_{\hat{\pi}_f(\underline{R})}$ is a linear isomorphism.

The map $\hat{A}_{\mathcal{F}}\colon \hat{\Sigma}_f\to \text{GL}(d, \mathbb{R})$ defined by \begin{equation}\label{cocycle}
\hat{A}_{\mathcal{F}}(\underline{R})=L(\hat{\sigma}_f((\underline{R})))^{-1} \circ \mathcal{F} \circ L(\hat{\pi}_f((\underline{R})))
\end{equation} is a continuous linear cocycle. Moreover, by Remark \ref{lip}, we know that $\hat{A}_{\mathcal{F}}$ is $\alpha$-H\"older and $\frac{\chi}{2}$-fiber bunched and therefore admits invariant holonomies.

The relation in Equation (\ref{cocycle}) implies that the Lyapunov spectrum of $\hat{A}_{\mathcal{F}}$ and $\mathcal{F}$ coincides. 
\end{proof}

\subsection{Proof of Theorem \ref{theorem-criterium}} Let $f:M\to M$ be $C^{2}$ diffeomorphism, $\mathcal{F}:\mathcal{E}\to \mathcal{E}$ be $\chi/2$-fiber bunched linear cocycle and $\mu$ be an ergodic $\chi$-hyperbolic measure of saddle type which is an equilibrium state of a H\"older potential, or an SRB measure as in the hypotheses of Theorem \ref{theorem-criterium}. 

Applying Theorem \ref{coding} and Lemma \ref{cocycle-A}, we consider the topological Markov shift $(\hat{\Sigma}_f, \hat{\sigma}_f, \hat{\mu}_f)$ and the linear cocycle $\hat{A}_{\mathcal{F}}\colon \hat{\Sigma}_f\to \text{GL}(d, \mathbb{R})$. Observe that in order to prove Theorem \ref{theorem-criterium} it is enough to prove that the Lyapunov spectrum of $\hat{A}_{\mathcal{F}}$ is simple. Then, by Theorem \ref{simple cocycle}, we only need to verify that $\hat{A}_{\mathcal{F}}$ is simple for $\hat{\mu}_f$.

Let $p$ and $z$ be given by the hypotheses of Theorem \ref{theorem-criterium}. That is, $p$ is a $\chi$-hyperbolic pinching periodic point, $p\sim_h \mu$ and $z\in W^u(p)\pitchfork W^s(p)$ is twisting with respect to $p$. Considering the orbit of $p$ and $z$ we can define a transitive hyperbolic set $\Lambda\subset M$ such that $p,z\in \Lambda$. Moreover, since $p\sim_h \mu$, we conclude that $\Lambda\sim_h \mu$ and therefore there exists $K\subset \hat{\Sigma}_{f}$ such that $\hat{\pi}_{f}(K)=\Lambda$.

By the observation above, there exist $\underline{P}, \underline{U}\in K$ such that $\underline{P}$ is a $\hat{\sigma}_{f}$-periodic point and $\underline{U}\in W^u_{loc}(\underline{P})$ is homoclinic with $\hat{\pi}_{f}(\underline{P})=p$ and $\hat{\pi}_{f}(\hat{\sigma}_{f}^l(\underline{U}))=z$ for some $l\geq 1$ multiple of the period of $\underline{P}$. 

Observe that $\hat{A}_{\mathcal{F}}$ satisfies properties (1) and (2) in Definition \ref{pinch.twist.}. In fact, by the definition of the holonomies of $\mathcal{F}$ in Equation (\ref{holonomies.diffeo}), the definition of the holonomies of $\hat{A}_{\mathcal{F}}$ in Equation (\ref{holonomies}) and the relation of the two cocycles in Equation (\ref{cocycle}), we conclude that the transition map $\psi^{\hat{A}_{\mathcal{F}}}_{\underline{P}, \underline{U}}$ defined in Equation (\ref{transition.cocycle}) is conjugated by the linearization $L$ to the map $\psi^{\mathcal{F}}_{p,z}$ defined by Equation (\ref{transition.A}). 

Recall that $\hat{\mu}_{f}$ given by Theorem \ref{coding} has full support. Then, we deduce that $\hat{A}_{\mathcal{F}}$ is a simple cocycle for $\hat{\mu}_{f}$ and applying Theorem \ref{simple cocycle} and Lemma \ref{cocycle-A} we conclude the proof.  \qed

\begin{rem}
    In \cite{BPV}, simplicity of the spectrum was proved to be generic for fiber-bunched linear cocycles over a fix non-uniformly hyperbolic diffeomorphism in the base. In that result it was used the coding of \cite{O,S} to conjugate the base dynamics by a Markov shift and then perturb the cocycle to make it simple and use the Avila-Viana criterion, Theorem~\ref{simple cocycle}. In that argument,  it was essential that the base dynamics was conjugated to a fixed Markov shift. This cannot be done here, because when we perturb $f$ the Markov shift of Theorem \ref{coding} changes in a non trivial way. Because of that, it was essential to have a criterion that does not rely on a fixed Markov shift.
\end{rem}

\section{Proof of Theorem~\ref{theo-intro-DA} and \ref{theorem-mme-simple}} \label{sec.proofthmB}

Fix $r\geq 2$ and for $i=1,2$, consider the subset $\mathcal{V}_i$ defined in Section 2.2. In the following, we prove the simplicity of the Lyapunov spectrum for the measures of maximal entropy in an $C^2$ open and $C^r$ dense subset of $\mathcal{V}_i$.

\begin{proof}[Proof of Theorem \ref{theorem-mme-simple}]

Consider $f_0\in \mathcal{V}_1\cup \mathcal{V}_2$. By \cite{MP1} and \cite{MPP}, $f_0$ has a finite number of ergodic measures of maximal entropy. Let $\upsilon_1,\dots,\upsilon_N$ be these measures.

By Lemma 3.1 of \cite{MP1} and Lemma 5.3 of \cite{MPP}, the measures $\upsilon_1,\dots, \upsilon_N$ are hyperbolic. Moreover, Katok's Horseshoe Theorem \cite{K}, implies that there exist horseshoes $\Lambda_j$ which are good approximations of the dynamics for every $1\leq j \leq N$. By Theorem 2.12 of \cite{BCS1}, we can assume that $\Lambda_j\sim_h \upsilon_j$. Observe that since $f\in \mathcal{V}_1 \cup \mathcal{V}_2$ is partially hyperbolic, then every $\upsilon_j$ is of saddle type. 

Applying Theorem 3.2 of \cite{ACW}, we can find $f$ $C^r$-arbitrarily close to $f_0$ such that for every $1\leq j\leq N$, the continuation $\Lambda_j(f)$ admits a dominated splitting into one dimensional bundles.

For $1\leq j\leq N$, consider $p_j,z_j\in \Lambda_{j}(f)$ such that $p_j$ is a hyperbolic periodic point and $z_j\in W^u_f(p_j)\pitchfork W^s_f(p_j)$. Denote $n(p_j)$ the period of $p_j$. Since $\Lambda_j(f)$ admits a dominated splitting into one dimensional bundles, we can assume, by a small perturbation if necessary, that for every $1\leq j\leq N$, all the eigenvalues of $Df^{n(p_j)}_{p_j}\vert_{E^{uu}_{p_j}}$ have distinct absolute values. 

Again by Lemma 3.1 of \cite{MP1} and by Lemma 5.4 of \cite{MPP}, there exist $\chi>0$ and $\mathcal{V}_0$, a $C^2$ neighborhood of $f_0$, such that for every $g\in \mathcal{V}_0$, and any ergodic measure of maximal entropy of $g$, $\upsilon$, then \begin{equation}\label{semi.cont.entr}
\begin{aligned}
h_{\upsilon}(g)&-h^u(g)>\chi, \; \text{ if } g\in \mathcal{V}_1 \text{ or },\\
h_{\upsilon}(g)&-\max\{h^u(g), h^s(g)\}>\chi,\;  \text{ if } g\in \mathcal{V}_2,
\end{aligned}
\end{equation} and $\upsilon$ is $\chi$-hyperbolic. 

Since $f$ is arbitrarily close to $f_0$, we can suppose that $f\in \mathcal{V}_0$. Moreover, by conditions \eqref{eq.condition1} and \eqref{eq.condition2}, we know that $\mathcal{F}=Df\vert_{E_f^{uu}}$ is a $\chi/2$-fiber bunched cocycle. See Equation (\ref{cu}) and Definition \ref{fb}. Then, the following two limits exist, \begin{equation}\label{holonomies.difeo} 
\begin{aligned}
H^{s, f}_{z_j,p_j}&=\lim_{n\to \infty} (\mathcal{F}^{n}_{p_j})^{-1}\circ \mathcal{F}^{n}_{z_j},\\
H^{u, f}_{p_j,z_j}&=\lim_{n\to \infty} (\mathcal{F}^{-n}_{z_j})^{-1}\circ \mathcal{F}^{-n}_{p_j}.
\end{aligned}
\end{equation}

Choose $\delta>0$ small enough such that for every $1\leq j\leq N$, the iterates of $p_j$ and $z_j$ do not belong to $\cup_{j=1}^N B_{\delta}(z_j)$. Let $g_0$ be a $C^r$ perturbation of $f$ supported on $\cup_{j=1}^N B_{\delta}(z_j)$ such that $g_0(z_j)=f(z_j)$ and $D{g_0}(z_j)=Df(z_j) \circ R_j$ where $R_j\colon T_{z_j}M\to T_{z_j}M$ is some linear isomorphism preserving $E^{uu}_{z_j}$.

Observe that $p_j,z_j \in \Lambda_{j}(g_0)$ and $z_j\in W^s_{g_0}(p_j)\pitchfork W^u_{g_0}(p_j)$. Moreover, \begin{equation}\label{transition} \psi^{g_0}_{p_j,z_j}:=H^{s, g_0}_{z_j,p_j}\circ H^{u, g_0}_{p_j,z_j}=H^{s, f}_{z_j,p_j}\circ R_j\vert E^{uu}_{z_j}\circ H^{u, f}_{p_j,z_j}.
\end{equation} Therefore, choosing the maps $R_j$ properly, we can conclude that for every $1\leq j\leq N$, the map $\psi^{g_0}_{p_j,z_j}$ does not preserves the eigenspaces associated to $E^{uu}_{p_j}$.

There exists a $C^2$ neighborhood of $g_0$, small enough, $\mathcal{W}$, such that for every $g\in \mathcal{W}$ and every $1\leq j\leq N$, there exist $p_{j}(g)$ and $z_{j}(g)$ verifying Definition \ref{pinch.twist.nuh}. Here $p_j(g)$ is the continuation of $p_j$ and we are using the continuity of the holonomies in Equation (\ref{holonomies.difeo}) as functions of the diffeomorphism and the points. 

By \cite[Corollary~5.3]{MP1}, in the case $f_0\in \mathcal{V}_1$, and \cite[Lemma~5.6]{MPP}, in the case $f_0\in \mathcal{V}_2$, if $g\in \mathcal{W}$ is close enough to $f_0$, for every ergodic measure of maximal entropy $\upsilon$ of $g$, there exists $0\leq j\leq N$ such that $p_j(g)\sim_h \upsilon$. Then, by the previous paragraph and Theorem~\ref{theorem-criterium}, we have that for every $g\in \mathcal{W}$, the cocycle $\mathcal{G}^u=Dg\vert_{E^{uu}_g}$ has simple Lyapunov spectrum for every measure of maximal entropy. Recall that Equation (\ref{semi.cont.entr}) guarantee that the measure $\upsilon$ is $\chi$-hyperbolic and the cocycle $\mathcal{G}^u$ is $\chi/2$-fiber bunched. 

Analogously, we can repeat the argument to obtain a subset of $\mathcal{W}$ where the cocycle $\mathcal{G}^s=Dg\vert_{E^{ss}_g}$ also has simple Lyapunov spectrum. This concludes the proof of the theorem for the case $\dim E^c=1$. Observe that it is also sufficient to conclude the case $\dim E^c=2$, since every $f\in \mathcal{V}_2$ admits a dominated splitting in $E^c$.
\end{proof}

\begin{proof}[Proof of Theorem~\ref{theo-intro-DA}]
   By \cite{FPS}, the unstable manifolds of $f$ have quasi isometric lift to the universal cover, then the same proof of \cite[Theorem B]{MPP}, see also \cite{MP2}, can be applied to show that $h^u(f)<h_{top}(f)$. Taking $\delta>0$ small enough, we know that $f$ is in $\mathcal{V}_1$, then we conclude the result.
\end{proof}

\section{Homoclinic classes of hyperbolic measures}

Let $m$ denote a probability measure in the Lebesgue class. We define $\mathit{Diff}^r_{m}(M)$ as the set of volume-preserving $C^r$ diffeomorphisms and denote $\mathit{PH}^r_{m}(M)$ the subset of $\mathit{Diff}^r_{m}(M)$ consisting of partially hyperbolic diffeomorphisms. 

Consider $f\in \mathit{PH}^1_{m}(M)$ such that $m$ is ergodic. The \emph{center Lyapunov exponents} of $f$ are the Lyapunov exponents of $Df\vert_{E^c}$ and we denote them by $\lambda^c_1(f)\geq \dots \geq \lambda_{d_c}(f)$, counted with multiplicity, where $d_c=\operatorname{dim}E^c$. We say that the center Lyapunov exponents of $f$ vary continuously if for every sequence $f_k\to f$ in $\mathit{Diff}^1_{m}(M)$ such that $f_k$ is ergodic, we have $\lambda^c_i(f_k)\to \lambda^c_i(f)$ for $i=1,\dots,d_c$.

A map $f\in \mathit{Diff}^2_{m}(M)$ is said to be \textit{non-uniformly hyperbolic} if $m$ is hyperbolic. That is, if for $=1,\dots,d_c$, $\lambda_i^c(f,x)$ is non-zero for $m$-almost every $x\in M$.

Let $\mathcal{A}\subset \mathit{PH}^2_{m}(M)$ be the set formed by diffeomorphisms such that they are non-uniformly hyperbolic, stably-ergodic and the center Lyapunov exponents vary continuously. By the results in \cite{LMY, M} this set has non-empty interior in $\mathit{Diff}^2_{m}(M)$ if $M=\mathbb{T}^{2d}$, $d\geq 2$.  See the proof of Corollary \ref{theorem-main}.

%\begin{claim}$\mathcal{A}$ has non-empty interior in $\mathit{Diff}^r_{\mu}(M)$ if $M=\mathbb{T}^{2d}$, $d\geq 2$.
%\end{claim}
%\begin{proof}
%Theorem A of \cite{SW} implies the existence of maps in $\mathit{PH}^r_{\omega}(\mathbb{T}^{2d})$ satisfying the hypothesis of Theorem A of \cite{M}. Here $\omega$ denotes the symplectic form. The arguments in the proof of \cite{M}, show that there exists $f\in \mathit{PH}^r_{\omega}(\mathbb{T}^{2d})$ and $m$ invariant for the center derivative cocycle of $f$ projecting to $\mu$ such that $m$ is not an $su$-state. Then, the set $\mathcal{A}$ is non-empty. 
%Observe that although the map $f$ is a symplectic diffeomorphism, it is a continuity point for the center Lyapunov exponents in $\mathit{Diff}_{\mu}^r(M)$. Therefore, there exists an open subset $\mathcal{V}_1$ in $\mathit{Diff}_{\mu}^r(M)$, such that every $g\in \mathcal{V}_1$ is non-uniformly hyperbolic.
%Moreover, by Theorem 4.2 of \cite{K}, there exists a hyperbolic periodic point $p$ of period $n_p$ for $f$ such that the eigenvalues of $Dg^{n_p}_p\vert E^c(p)$ have different norms and both norms are different from one. This is because $f$ is a symplectic diffeomorphism. By Theorem D of \cite{LMY}, there exists a non-empty set $\mathcal{V}_2$ which is open in $\mathit{Diff}_{\mu}^r(M)$, $\mathcal{V}_2\subset \mathcal{V}_1$ and such that every $g\in \mathcal{V}_2$ is a continuity point for the center Lyapunov exponents. 
%\end{proof}

Observe that for every $f\in \mathcal{A}$, $m$ is an hyperbolic measure of saddle type. The proposition below is the main result of this section and its corollary is going to be a key element to conclude Theorem \ref{theo-intro}.

\begin{prop}\label{prop.homocl}
    Let $f\in \mathcal{A}$. If $p$ is a periodic hyperbolic point for $f$ such that $p \sim_h m$, then for any $C^2$ volume-preserving diffeormorphism $g$, $C^1$ sufficiently close to $f$, the hyperbolic continuation $p_g$ satisfies $p_g \sim_h m$. That is, $p_g$ is also homoclinically related to $m$.
\end{prop}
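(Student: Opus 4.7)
The strategy is to show that the transverse intersections witnessing $p\sim_h m$ for $f$ persist under $C^1$-small perturbations. The three ingredients are: (i) continuation of the invariant manifolds of the hyperbolic periodic point $p$; (ii) a Pesin-block comparison of the Pesin stable/unstable manifolds of $(f,m)$ and $(g,m)$; and (iii) openness of transversality. All three are controlled thanks to the hypotheses built into $\mathcal{A}$ (stable ergodicity plus continuous center Lyapunov exponents).

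First I would check that the hyperbolicity of $m$ is preserved. Since $f\in\mathcal{A}$ is stably ergodic, every $C^2$ volume-preserving $g$ in a sufficiently small $C^1$-neighborhood $\mathcal{U}$ of $f$ is ergodic for $m$; since the center Lyapunov exponents vary continuously on $\mathcal{A}$ and $f$ is non-uniformly hyperbolic, after shrinking $\mathcal{U}$ we may also assume that all center exponents of $g$ stay bounded away from zero. Hence $m$ is an ergodic hyperbolic measure of saddle type for $g$, with the same index as for $f$. The hyperbolic continuation $p_g$ then exists for $g\in\mathcal{U}$, and the global invariant manifolds $W^{s/u}_g(\mathcal{O}(p_g))$ depend $C^1$-continuously on $g$ on compact pieces.

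The main technical step is to compare the Pesin structures of $(f,m)$ and $(g,m)$. Fix a Pesin block $K_N$ of $(f,m)$ with $m(K_N)$ as close to $1$ as desired, on which the Pesin stable and unstable manifolds of $f$ have uniform size $\delta>0$ and bounded $C^1$-geometry. Using partial hyperbolicity (the strong bundles $E^{ss}_g,E^{uu}_g$ are $C^0$-close to $E^{ss}_f,E^{uu}_f$) together with the continuity of the center Lyapunov exponents, one builds, for $g\in\mathcal{U}$, a Pesin block $K_N^g$ of $(g,m)$ with $m(K_N\cap K_N^g)>0$ and on which the local Pesin manifolds of $g$ are $C^1$-close to those of $f$. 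Pesin-block comparisons of this flavor appear in the non-uniformly hyperbolic literature (for instance in the proofs of stable ergodicity in \cite{ACW,LMY}). By hypothesis, for $N$ large the positive $m$-measure set $X_f$ witnessing $p\sim_h m$ for $f$ meets $K_N$ in positive measure, and on $K_N\cap K_N^g\cap X_f$ both families $W^{u/s}_g(x)$ and $W^{s/u}_g(\mathcal{O}(p_g))$ are $C^1$-close to their $f$-counterparts, so the transverse intersections $W^u_f(x)\pitchfork W^s_f(\mathcal{O}(p))$ and $W^s_f(x)\pitchfork W^u_f(\mathcal{O}(p))$ yield corresponding transverse intersections for $g$. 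This produces a positive $m$-measure set $X_g$ witnessing $p_g\sim_h m$.

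The main obstacle is the Pesin block comparison. Pesin constants depend on $g$ through Lyapunov exponents (handled by the continuous-center-exponent hypothesis) and through angles between Oseledets sub-bundles, which are not generally continuous in $g$. The partially hyperbolic structure resolves this for the strong directions, since $E^{ss}$ and $E^{uu}$ are uniformly transverse and $C^0$-continuous; the remaining center Oseledets splitting is controlled using the Lyapunov exponent hypothesis and the $C^1$-closeness of $Dg$ to $Df$, producing the required uniform Pesin-manifold estimates on a $g$-Pesin block of positive $m$-measure intersecting $K_N$.
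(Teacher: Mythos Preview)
Your outline is correct and matches the paper's strategy: robust hyperbolicity of $m$ via stable ergodicity and continuity of center exponents, uniform Pesin blocks for nearby $g$, $C^1$-continuity of local Pesin manifolds on those blocks, and persistence of the transverse heteroclinic intersections with the (uniformly continued) manifolds of $p_g$. The paper organizes the same argument by contradiction, taking $f_k\to f$ and passing to a Hausdorff limit of the Pesin blocks $\Delta_{C,\chi',\varepsilon}(f_k)$, but this is only cosmetic.

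The one place where your sketch is genuinely incomplete is exactly the step you flag as the ``main obstacle'': producing Pesin blocks for $g$ with the \emph{same} constants $(C,\chi',\varepsilon)$ and uniformly large $m$-measure. Saying that ``comparisons of this flavor appear in the literature'' and that the center Oseledets splitting is ``controlled using the Lyapunov exponent hypothesis'' does not yet give a mechanism. The paper supplies one in Lemma~\ref{prop.uniform.pesin}: the set $K_N=\{x:\|Df^N|_{E^-_x}\|<e^{-\chi N},\ \|Df^{-N}|_{E^+_x}\|<e^{-\chi N}\}$ is defined by an \emph{open} condition on the center Grassmannian, and continuity of the center exponents is equivalent (by \cite{BP}) to weak$^*$ continuity of the Oseledets lifts $m^\pm_g$, so $m(K_N(g))>1-\delta$ for $g$ $C^1$-close to $f$. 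A Pliss-type argument (Lemma~\ref{lem.pliss}) then turns this into uniform hyperbolic-time estimates, and Proposition~\ref{prop.hyptimestopesinblock} converts those into a Pesin block $\Delta_{C,\chi',\varepsilon}(g)$ of measure $>1-\theta$ with $C$ independent of $g$. This is precisely what bridges ``continuous exponents'' to ``uniform Pesin manifolds'', and without it your step~(ii) is an assertion rather than an argument.
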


\begin{coro}\label{prop.homocl.ferradura}
    Let $f\in \mathcal{A}$. If $\Lambda$ is a hyperbolic transitive set which is homoclinically related to $m$, then for any $C^2$ volume-preserving diffeomorphism $g$, $C^1$ sufficiently close to $f$, the hyperbolic continuation $\Lambda_g$ is also homoclinically related to $m$.
\end{coro}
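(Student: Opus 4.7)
The plan is to reduce Corollary \ref{prop.homocl.ferradura} to Proposition \ref{prop.homocl} by exhibiting a hyperbolic periodic point $p \in \Lambda$ that is itself homoclinically related to $m$, applying the proposition to $p$, and then transferring the conclusion back to the full continuation $\Lambda_g$.

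First, I would produce such a periodic point. Since $\Lambda$ is a hyperbolic transitive set, classical hyperbolic theory provides a dense set of hyperbolic periodic points in $\Lambda$; fix one such $p$. The observation recorded just before Proposition \ref{prop.homocl} yields that the orbit measure $\delta_{\mathcal{O}(p)}$ is homoclinically related to every ergodic hyperbolic measure of saddle type supported on $\Lambda$. By the hypothesis $\Lambda \sim_h m$, there is an ergodic measure $\nu$ on $f|_\Lambda$ with $\nu \sim_h m$, which yields the chain
\[\delta_{\mathcal{O}(p)} \sim_h \nu \sim_h m.\]
Transitivity of $\sim_h$ on ergodic hyperbolic measures of saddle type then gives $\delta_{\mathcal{O}(p)} \sim_h m$, which unpacks to $p \sim_h m$ in the sense of Definition \ref{homoclinic.point}.

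Next, I would apply the periodic-point version: Proposition \ref{prop.homocl} applied to $p$ implies that for every $g \in \mathit{Diff}^2_m(M)$ sufficiently $C^1$-close to $f$, the hyperbolic continuation $p_g$ satisfies $p_g \sim_h m$. By the structural stability of hyperbolic basic sets, the continuation $\Lambda_g$ is again a hyperbolic transitive set for $g$ and contains $p_g$; hence $\delta_{\mathcal{O}(p_g)}$ is an ergodic hyperbolic measure of saddle type on $\Lambda_g$ homoclinically related to $m$, giving $\Lambda_g \sim_h m$ by definition.

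The only non-trivial ingredient is the transitivity of $\sim_h$ used in the first step: given $\mu_1 \sim_h \mu_2 \sim_h \mu_3$, one must combine the transverse heteroclinic intersections supplied by the two constituent relations using ergodicity of the intermediate measure (so that a $\mu_2$-generic orbit enters both witnessing positive-measure sets along a single trajectory) together with the Pesin $\lambda$-lemma (to propagate the transversality of $W^u(x_2)\pitchfork W^s(x_3)$ into a transverse intersection of $W^u(x_1)$ with a backward iterate of $W^s(x_3)$). Up to a standard measurable selection of the iteration count, this yields positive-measure sets witnessing $\mu_1 \sim_h \mu_3$. The remaining steps are direct consequences of Proposition \ref{prop.homocl} and classical structural stability of hyperbolic basic sets.
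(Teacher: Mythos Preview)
Your proposal is correct and follows essentially the same route as the paper: pick a periodic point $p\in\Lambda$, observe $p\sim_h m$, apply Proposition~\ref{prop.homocl} to obtain $p_g\sim_h m$, and conclude $\Lambda_g\sim_h m$ since $p_g\in\Lambda_g$. The only difference is that you spell out the transitivity argument for $\delta_{\mathcal{O}(p)}\sim_h\nu\sim_h m$, whereas the paper treats $p\sim_h m$ as immediate from its earlier observation that any periodic point in $\Lambda$ is homoclinically related to $\Lambda$.
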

\begin{proof}
Take a periodic point $p\in \Lambda$, by Proposition~\ref{prop.homocl}, for any $C^2$ volume-preserving diffeomorphism $g$, $C^1$ sufficiently close to $f$, the hyperbolic continuation of $p$, $p_g$, is homoclinically related to $m$. As $p_g\in \Lambda_g$ this implies that $\Lambda_g\sim_h m$.
\end{proof}

In the following be present some definitions and results that we will need to prove Proposition \ref{prop.homocl}.

\begin{defn}\label{Pesin.block} Let $f$ be a $C^1$ diffeomorphism. Given $C>0$, $\chi>0$ and $\varepsilon>0$, we denote $\Delta_{C,\chi,\varepsilon}$ the Pesin block defined by the following property: for every $x\in \bigcup_{n\in \mathbb{Z}} f^n(\Delta_{C,\chi,\epsilon})$, there is a decomposition $T_xM=E_x\oplus F_x$ such that for every $y\in \Delta_{C,\chi,\varepsilon}$, $n\in \mathbb{Z}$ and $j\geq 0$, $$\max \left( \|Df^j\vert_{E_{f^n(y)}}\|, \|Df^{-j}\vert_{F_{f^{n}(y)}} \| \right)\leq C\, \exp(-\chi j + \varepsilon |n|).$$
\end{defn}

\begin{lemma}\label{prop.uniform.pesin}
    Let $f\in \mathcal{A}$ and $\chi>0 $ be such that $m$ is $\chi$-hyperbolic. If $\chi'=\chi/4$, then for every $\varepsilon>0$ and $\theta>0$, there exist a constant $C>0$ and a $C^1$-neighborhood of $f$, $\mathcal{U} = \mathcal{U}(\chi, \varepsilon, \theta)$,  such that for every $C^2$ volume-preserving diffeomorphism $g\in \mathcal{U}$, $m(\Delta_{C,\chi',\varepsilon}(g))> 1- \theta$.
\end{lemma}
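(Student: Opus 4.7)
The plan is to exploit the uniform partial hyperbolicity on the extreme bundles, use the continuity of the center exponents (which is part of the definition of $\mathcal{A}$) to control the Oseledets subbundles of $E^c$, and finally extract a Pesin regularity estimate that is uniform over a $C^1$-neighborhood of $f$.

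First, by $C^1$-openness of partial hyperbolicity I fix a $C^1$-neighborhood $\mathcal{U}_0$ of $f$ and constants $K>0$, $\chi_0>\chi$ such that
\[\|Dg^j|_{E^{ss}_g(x)}\| \le K e^{-\chi_0 j}, \qquad \|Dg^{-j}|_{E^{uu}_g(x)}\| \le K e^{-\chi_0 j},\]
for every $g\in\mathcal{U}_0$, $x\in M$ and $j\ge 0$. If in Definition~\ref{Pesin.block} I set $E_x^{(g)}=E^{ss}_g(x)\oplus E^{cs}_g(x)$ and $F_x^{(g)}=E^{uu}_g(x)\oplus E^{cu}_g(x)$, then the contributions of the extreme bundles already satisfy the Pesin estimate with a constant independent of $g$ and without any subexponential correction. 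The rest of the argument only has to deal with $E^{cs}_g$ and $E^{cu}_g$.

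Next I use that $f\in\mathcal{A}$: stable ergodicity furnishes a smaller $C^1$-neighborhood $\mathcal{U}_1\subset\mathcal{U}_0$ on which every $C^2$ volume-preserving $g$ is ergodic for $m$, and the continuity hypothesis on the center Lyapunov exponents allows me to shrink $\mathcal{U}_1$ further so that every center exponent of every such $g$ has absolute value at least $3\chi/4$. Consequently, for each $g\in\mathcal{U}_1$ the measure $m$ is ergodic and $(3\chi/4)$-hyperbolic, and the Oseledets splitting $E^c_g=E^{cs}_g\oplus E^{cu}_g$ is defined $m$-almost everywhere.

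The main step is then the following uniform-in-$g$ large-deviation estimate: for every $\delta>0$ there exist $N=N(\delta)$ and a measurable set $A=A(\delta)\subset M$ with $m(A)>1-\theta/4$ such that, for every $g\in\mathcal{U}_1$, every $y\in A$ and every $n\ge N$,
\[\log\|Dg^n|_{E^{cs}_g(y)}\| \le n\bigl(\lambda^{cs}(g)+\delta\bigr), \qquad \log\|Dg^{-n}|_{E^{cu}_g(y)}\| \le n\bigl(-\lambda^{cu}(g)+\delta\bigr).\]
For each \emph{fixed} $g$ this is Oseledets combined with Egorov. To obtain uniformity in $g\in\mathcal{U}_1$ I use the continuity of $g\mapsto Dg$ in $C^0$, the uniform bound $\log\|Dg\|_\infty\le \Lambda$ on $\mathcal{U}_1$, and the continuity of $g\mapsto \lambda^{cs}(g),\lambda^{cu}(g)$, to cover $\mathcal{U}_1$ by finitely many $C^1$-balls at whose centers Egorov yields the estimate, transferring the estimate to the whole ball by an equicontinuity argument on the functions $y\mapsto \log\|Dg^n|_{E^{cs}_g(y)}\|$ for the finitely many $n\le N$ involved. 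Taking $\delta=\chi/4$ then gives exponential contraction at rate $\chi/2$ on $A$ uniformly in $g$; absorbing the finitely many terms $n<N$ into the constant produces a uniform $C_0$ with $\|Dg^n|_{E^{cs}_g(y)}\|\le C_0 e^{-\chi n/2}$ for $y\in A$, $n\ge 0$, and analogously for $E^{cu}_g$. This uniform-in-$g$ large-deviation estimate is the main technical obstacle, since a priori the rate at which $n^{-1}\log\|Dg^n|_{E^{cs}_g}\|$ approaches $\lambda^{cs}(g)$ need not be controlled uniformly in $g$, and its proof relies crucially on the continuity of the center exponents on $\mathcal{A}$.

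Finally, I pass from this pointwise bound on $A$ to the tempered Pesin estimate on a set $B\subset M$ with $m(B)>1-\theta$. For each $g\in\mathcal{U}_1$, ergodicity of $m$ together with Birkhoff's theorem gives that the frequency of visits of the orbit of $y$ to $A$ converges $m$-a.e.\ to $m(A)>1-\theta/4$; a further Egorov step, made uniform in $g$ by the same finite-covering argument on $\mathcal{U}_1$, produces the desired $B$ on which this convergence holds at a uniform rate. Concatenating the contraction estimate on the orbit segments staying in $A$ with the uniform bound $\|Dg\|_\infty\le e^{\Lambda}$ on the short excursions outside $A$ yields, for every $y\in B$, $n\in\mathbb{Z}$ and $j\ge 0$,
\[\|Dg^j|_{E^{cs}_g(g^n y)}\| \le C\, e^{-\chi' j+\varepsilon|n|},\]
with a constant $C=C(\mathcal{U}_1,\varepsilon,\theta)$ independent of $g$, and symmetrically on the $E^{cu}_g$-part. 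Combined with the uniform estimates on $E^{ss}_g$ and $E^{uu}_g$ from the first step, this gives $B\subset \Delta_{C,\chi',\varepsilon}(g)$ for every $g\in\mathcal{U}=\mathcal{U}_1$, finishing the proof.
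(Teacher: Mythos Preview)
Your overall architecture is reasonable, but the heart of the argument --- the ``uniform-in-$g$ large-deviation estimate'' --- has a genuine gap. You assert the existence of a \emph{single} set $A$ with $m(A)>1-\theta/4$ on which, for \emph{every} $g\in\mathcal{U}_1$, the quantities $\|Dg^n|_{E^{cs}_g(y)}\|$ are controlled. Two things break here. First, $\mathcal{U}_1$ is an open subset of the infinite-dimensional space $\mathit{Diff}^r_m(M)$ and is not precompact, so it cannot be covered by finitely many $C^1$-balls; your finite-cover reduction never gets off the ground. Second, even locally, the transfer you describe as ``equicontinuity of $y\mapsto \log\|Dg^n|_{E^{cs}_g(y)}\|$'' is not available: the Oseledets bundle $E^{cs}_g$ is only measurable in $y$ and only defined on a $g$-dependent full-measure set, so for a fixed $y\in A$ the subspace $E^{cs}_g(y)$ need not even exist, and there is no continuity in $(g,y)$ to appeal to. Controlling $Dg^n$ for $n\le N$ in $C^0$ does not help, because the dependence of $E^{cs}_g(y)$ on $g$ involves the entire orbit.

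The paper avoids exactly this trap. It works with $g$-dependent sets $K_N(g)$ (defined by a norm condition on $Dg^N$ restricted to the Oseledets directions), and uses that continuity of the center Lyapunov exponents implies weak* continuity of the lifts $m^{\pm}_g$ of $m$ to the Grassmannian of $E^c$ (Backes--Poletti). Since the condition defining $K_N$ is \emph{open} on the Grassmannian, weak* convergence of $m^{\pm}_g$ yields $m(K_N(g))>1-\delta$ for all $g$ near $f$, with $N$ fixed. From there a Pliss-type lemma and a result of Buzzi--Crovisier--Sarig convert large $m(K_N(g))$ into large $m(P_N(g))$ and then into large $m(\Delta_{C,\chi',\varepsilon}(g))$, with constants depending only on $N$, $\varepsilon$, and $\sup_{\mathcal{U}_1}\|Dg^{\pm 1}\|$ --- all uniform. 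The point is that the uniformity is pushed through a measure-level (weak*) continuity statement on the Grassmannian, rather than through any pointwise comparison of $E^{cs}_g$ with $E^{cs}_f$; your proposal needs a mechanism of this kind to close the gap.
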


This result is essentially contained in Sections 2 and 3 of \cite{BCS2}. For completeness,  let us explain the argument.  We will need the following version of the Pliss lemma.
\begin{lemma}[\cite{BCS2} Lemma $2.18$]\label{lem.pliss}
Let $T$ be an invertible measure preserving map on a probability space $(\Omega, \mathcal{B},\upsilon)$. Suppose that $\varphi \in L^{\infty}(\upsilon)$, $L\in \mathbb{R}$, $\kappa \in [0,1]$, and $\upsilon\{x\in \Omega: \varphi(x) >L\} \leq \kappa$. Then, for any $\beta <L$,
\[
\upsilon\{x\in \Omega:  \varphi_j(x) \geq \beta j\} \geq \frac{\int \varphi d\upsilon - \beta - \kappa \|\varphi - L\|_{\infty}}{L-\beta},
\]
where $\varphi_j(x) = \varphi(x) + \cdots+ \varphi(T^{j-1}(x))$. 
\end{lemma}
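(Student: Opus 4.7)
The plan is a short two-step computation: first truncate $\varphi$ at the level $L$ so that the pointwise bound $\varphi \leq L$ holds everywhere, then exploit the $T$-invariance of $\upsilon$ to split the integral of the Birkhoff sum over the ``good'' and ``bad'' sets and rearrange.

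First I would introduce $\bar\varphi := \min(\varphi, L)$, so that $\bar\varphi \leq L$ pointwise. The difference $\varphi - \bar\varphi = (\varphi - L)^+$ is nonnegative, supported on $\{\varphi > L\}$, and bounded pointwise by $\|\varphi - L\|_\infty$. Combined with the hypothesis $\upsilon\{\varphi > L\} \leq \kappa$, this gives $\int(\varphi - \bar\varphi)\, d\upsilon \leq \kappa \|\varphi - L\|_\infty$, hence
\[
\int \bar\varphi \, d\upsilon \;\geq\; \int \varphi \, d\upsilon - \kappa \|\varphi - L\|_\infty.
\]
Since $\bar\varphi \leq \varphi$ pointwise, also $\bar\varphi_j \leq \varphi_j$, so $\{\bar\varphi_j \geq \beta j\} \subseteq \{\varphi_j \geq \beta j\}$ and it is enough to lower bound the $\upsilon$-measure of the former set.

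Next, set $A := \{\bar\varphi_j \geq \beta j\}$. By $T$-invariance of $\upsilon$, one has $\int \bar\varphi_j \, d\upsilon = j \int \bar\varphi \, d\upsilon$. Splitting this integral over $A$ and $A^c$, and using $\bar\varphi \leq L$ (hence $\bar\varphi_j \leq Lj$) on $A$ together with the defining inequality $\bar\varphi_j < \beta j$ on $A^c$, one obtains
\[
j \int \bar\varphi \, d\upsilon \;\leq\; L j \, \upsilon(A) + \beta j \, \upsilon(A^c) \;=\; \beta j + (L - \beta) j \, \upsilon(A).
\]
Dividing by $j$ and by $L - \beta > 0$ (which is precisely where the hypothesis $\beta < L$ enters, ensuring the inequality does not flip) yields $\upsilon(A) \geq (\int \bar\varphi \, d\upsilon - \beta)/(L - \beta)$. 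Substituting the first-step estimate for $\int \bar\varphi$ then gives the stated bound.

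The argument is essentially self-contained; no obstacle should arise beyond bookkeeping. The one subtle point I want to be careful about is that the truncation is what lets the error term involve $\|\varphi - L\|_\infty$ rather than the possibly much larger $\|\varphi\|_\infty$: the large values of $\varphi$ contribute to the integral only on the set $\{\varphi > L\}$, which is assumed to have measure at most $\kappa$. If the right-hand side of the stated inequality is negative, the conclusion is vacuous, so one may assume throughout that $\int \varphi \, d\upsilon - \beta - \kappa \|\varphi - L\|_\infty \geq 0$.
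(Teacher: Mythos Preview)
The paper cites this lemma from \cite{BCS2} without proof, so there is nothing to compare against directly; the issue is whether your argument establishes what is actually needed. You treat $j$ as fixed and set $A := \{\bar\varphi_j \geq \beta j\}$ for that single $j$; under this reading your two-step argument is correct and clean. But the lemma is a Pliss-type statement, and more concretely, in the proof of Claim~\ref{claim.largemeasureP} the paper applies it to bound $m\{x : \psi_j(x) \geq \beta j \text{ for all } j \geq 0\}$ from below. So the set that must be estimated is $A = \{x : \bar\varphi_j(x) \geq \beta j \text{ for all } j \geq 1\}$. On $A^c$ one then only knows $\bar\varphi_{j(x)}(x) < \beta\, j(x)$ for \emph{some} $j(x)$ depending on $x$, and your step ``$\bar\varphi_j < \beta j$ on $A^c$, hence $\int_{A^c}\bar\varphi_j \leq \beta j\,\upsilon(A^c)$'' breaks down: there is no single $j$ that works for all of $A^c$.

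Your truncation to $\bar\varphi = \min(\varphi,L)$ and the resulting estimate $\int\bar\varphi \geq \int\varphi - \kappa\|\varphi-L\|_\infty$ are exactly right; only the second step needs repair. Apply the maximal ergodic theorem to $h := \beta - \bar\varphi$: since $A^c = \{x : \sup_{j\geq 1} \sum_{i=0}^{j-1} h(T^i x) > 0\}$, one obtains $\int_{A^c} h\, d\upsilon \geq 0$, that is $\int_{A^c}\bar\varphi\, d\upsilon \leq \beta\,\upsilon(A^c)$. Combined with the trivial bound $\int_A \bar\varphi \leq L\,\upsilon(A)$ this gives $\int \bar\varphi\,d\upsilon \leq \beta + (L-\beta)\,\upsilon(A)$, after which your division by $L-\beta>0$ and the substitution of the truncation estimate finish the proof unchanged.
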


We will also use the proposition below.

\begin{prop}[\cite{BCS2} Proposition $2.21$]\label{prop.hyptimestopesinblock}
Let $f$ be a $C^1$ diffeomorphism. Given $N_0 \geq 1$ and $\chi >0$,  $P_{N_0} = P_{N_0}(\chi)$ be defined as 
\[
P_{N_0} := \left\{x: \begin{array}{l} \textrm{ there is a splitting $T_xM = E_x \oplus F_x$ s.t. $\forall j\geq 0$}\\ \|Df^{jN_0}(x)|_{E}\| \leq e^{-\chi j N_0} \textrm{ and } \|Df^{-jN_0}(x)|_F\| \leq e^{-\chi j N_0} \end{array} \right\}.
\]
Write $C = C(f, N_0):= \max_{k\in [1, N_0]} \max\{\|Df^k\|, \|Df^{-k}\|\}.$Then,  for every $\varepsilon >0$, the Pesin block $\Delta_{C,\chi, \varepsilon}$ verifies
\[
\upsilon(M - \Delta_{C,\chi, \varepsilon}) \leq 4 \max \left\{ \frac{c_0}{\varepsilon}, 1 \right\} \upsilon(M - P_{N_0}), \textrm{ for all invariant measures $\upsilon$},
\]
where $c_0 = \chi + \max\{\log\|Df\|, \log \|Df^{-1}\|\}.$
\end{prop}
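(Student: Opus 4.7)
The plan is to reduce the claim to an application of the Pliss-type Lemma \ref{lem.pliss}, once for $T:=f^{N_0}$ and once for $T^{-1}$. The heuristic is that membership in the Pesin block $\Delta_{C,\chi,\varepsilon}$ amounts to the orbit of $x$ under $T$ visiting $P_{N_0}$ with a sufficiently high uniform frequency in both time directions: each visit to $P_{N_0}$ contributes a factor $e^{-\chi N_0}$ to $\Vert Df^{N_0}\vert_{E}\Vert$, each excursion contributes at worst $e^{c_0 N_0}$, and the $e^{\varepsilon\vert n\vert}$ tolerance in the Pesin block definition is exactly what absorbs a controlled rate of excursions.

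First I would introduce a penalty function $\varphi:M\to\mathbb{R}$ with $\varphi(y)=-\chi N_0$ on $P_{N_0}$ and $\varphi(y)=c_0 N_0$ on $M\setminus P_{N_0}$, so that $\varphi$ dominates $\log\Vert Df^{N_0}\vert_{E_y}\Vert$ along orbits where the splitting is propagated. Choose $L$ slightly below $c_0 N_0$, so $\{\varphi>L\}=M\setminus P_{N_0}$ has measure $\kappa:=\upsilon(M\setminus P_{N_0})$, and set the threshold $\beta=(-\chi+\varepsilon)N_0$. Lemma \ref{lem.pliss} then bounds the measure of points at which the forward Birkhoff sum violates $\varphi_j(x)\leq \beta j$ by a quantity of order $\kappa\,\Vert\varphi-L\Vert_\infty/(L-\beta)=O(c_0\kappa/\varepsilon)$. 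A first-failure-time/maximal-function decomposition of the exceptional set upgrades this single-$j$ statement to a uniform-in-$j$ bound: a bad excursion occurring at time $n$ contributes an $e^{\varepsilon\vert n\vert}$ loss that is absorbed exactly by the Pesin block tolerance. Running the same argument for $T^{-1}$ handles backward orbits.

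For $x$ lying in the complement of both exceptional sets, I would construct the splitting $T_xM=E_x\oplus F_x$ as a limit of the $P_{N_0}$-splittings at the good times $f^{jN_0}(x)\in P_{N_0}$, using a standard graph transform/cone-field argument: the density of good times is close to one, so the pulled-back bundles converge and transversality is inherited from $P_{N_0}$. The uniform Birkhoff estimate $\varphi_j(x)\leq\beta j$ together with the trivial bound $C$ on intermediate iterates $f^k$ for $0\leq k<N_0$ then yields the Pesin block inequality $\Vert Df^j\vert_{E_{f^n(x)}}\Vert\leq C e^{-\chi j+\varepsilon\vert n\vert}$ and its backward analog, which is precisely membership in $\Delta_{C,\chi,\varepsilon}$.

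The step I expect to be most delicate is the constant bookkeeping that converts the single-$j$ Pliss output into the uniform-in-$j$ Pesin-block bound. The standard device is a Hardy--Littlewood-type maximal decomposition over the smallest failure time, combined with $T$-invariance of $\upsilon$ to translate bad configurations back to time zero; this produces the ratio $\Vert\varphi-L\Vert_\infty/(L-\beta)=O(c_0/\varepsilon)$. The final constant $4\max\{c_0/\varepsilon,1\}$ assembles from a factor $2$ for the forward/backward symmetry, a further factor $2$ from the maximal decomposition, and the $\max\{\cdot,1\}$ trivially covering the regime $c_0<\varepsilon$ in which the naive bound $\upsilon(M\setminus\Delta)\leq 1$ already dominates.
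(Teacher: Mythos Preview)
The paper does not contain a proof of this proposition: it is quoted verbatim as Proposition~2.21 of \cite{BCS2} and used as a black box in the proof of Lemma~\ref{prop.uniform.pesin}. There is therefore nothing in the present paper to compare your argument against. The only structural hint the paper gives is that it quotes the Pliss-type Lemma~\ref{lem.pliss} (also from \cite{BCS2}) immediately before this proposition, which is consistent with your plan to derive the result from that lemma.

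That said, two places in your sketch would need real work before it becomes a proof. First, the construction of the splitting $E_x\oplus F_x$ for $x$ outside the exceptional set is genuinely delicate: the definition of $P_{N_0}$ only asserts the \emph{existence} of some splitting at each point of $P_{N_0}$, with no uniqueness, continuity, or invariance, so ``taking a limit of the $P_{N_0}$-splittings at the good times'' is not well-posed as stated, and a cone-field argument needs uniform transversality input that is not obviously available. Second, the passage from the Pliss output (a lower bound on the measure of points where $\varphi_j\geq\beta j$ for \emph{all} $j$) to the Pesin-block estimate along the \emph{shifted} orbit $f^n(y)$ with the $e^{\varepsilon|n|}$ tolerance is not a routine maximal-function step; the $\varepsilon|n|$ term in Definition~\ref{Pesin.block} couples the time shift $n$ and the iterate $j$, and your ``first-failure-time'' decomposition does not by itself explain why the exceptional measure scales like $c_0/\varepsilon$ rather than blowing up with the number of shifts considered. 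If you want to fill this in, you should consult \cite{BCS2} directly.
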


\begin{proof}[Proof of Lemma \ref{prop.uniform.pesin}]
Let us do some estimates first for $f$.  We will then use the continuity of the Lyapunov exponents for ``spread'' these estimates for a neighborhood of $f$. 

Since $E^{ss}$ and $E^{uu}$ have uniform hyperbolicity, we only need to deal with the center direction. Suppose that $f$ has some positive and some negative center exponent. The other cases are analogous.

For an $m$-typical point, let  $E_x^-$ be the Oseledets direction in the center corresponding to the negative  Lyapunov exponents, and let $E_x^+$ be the direction corresponding to the positive exponents in the center.  Recall that $m$ is $\chi$-hyperbolic. For each $N\in \mathbb{N}$, define \[
K_{N} := \{ x\in M : \|Df^{N}_x|_{E_x^-}\| < e^{-\chi N} \textrm{ and } \|Df^{-N}_x|_{E_x^+}\| < e^{-\chi N}\}.
\]
The following claim is a direct consequence of the hyperbolicity of $m$.
\begin{claim}\label{claim.largemeasureK}
For every $\delta>0$, if $N$ is sufficiently large, then
\[
m(K_N) > 1-\delta.
\]
\end{claim}

\begin{claim}\label{claim.largemeasureP}
Let $\chi' = \frac{\chi}{4}. $ For every $\rho>0$, there exists $\delta>0$,  such that if $m(K_N) >1-\delta$, for some $N\in \mathbb{N}$,  then $m(P_N(\chi')) > 1-\rho$. 
\end{claim}
\begin{proof}
Suppose that $K_N$ has measure larger than $1-\delta$. We will see later what is the dependence of the measure of $P_N$ on $\delta$.   Consider the function $\psi(x) = \chi_{K_N}(x)$, where $\chi_{K_N}$ is the characteristic function.  Let $\psi^+$ be the forward Birkhoff average of $\psi$.  By ergodicity of $m$, for $m$-almost every $x$, $\psi^+(x) = \int \psi dm =  m(K_N) >1-\delta$.  

Consider $H = \max \{\|Df\|, \|Df^{-1}\|\}$ and take $\beta = \max\{ 1- \frac{\chi}{2 \log H}, \frac{3}{4}\}$.  Take $L=1$ and $\kappa =0$.  By Lemma \ref{lem.pliss},  we obtain that 
\[
m\left\{x: \psi_j(x) \geq \beta j, \textrm{ for all }j\geq 0\right\} \geq \displaystyle \frac{\int \psi dm - \beta}{1-\beta} \geq \frac{1-\delta - \beta}{1-\beta} = 1- \frac{\delta}{1-\beta}.
\]
Write $K^+_N:=\left\{x:\psi_j(x) \geq \beta j, \textrm{ for all }j\geq 0\right\} $.  For $x\in M$,  $j\in \mathbb{N}$, set $G(x,j,N):= \{i\in [0, j-1]: f^{iN}(x) \in K_N\}$. If $x\in K^+_N$, we have that for every $j\in \mathbb{N}$, 
\[
\displaystyle \frac{\# G(x,j,N)}{j} \geq \beta.
\]
For such $x$, we obtain
\[
\begin{array}{ll}
&\|Df^{jN}_x|_{E^-}\|\leq \\
&\displaystyle \left(\prod_{i\in G(x,j,N)} \|Df^N_{f^{iN}(x)}|_{E^-}\|\right)\left( \prod_{i\in (G(x,j,N))^c \cap [0,j-1]} \|Df^N_{f^{iN}(x)}|_{E^-}\|\right)\\
& = I . II
\end{array}
\]
From the definition of $K_N$, we have
\[
I \leq e^{-\chi N \# G(x,j,N)} \leq e^{-\chi  N \beta j},
\]
and 
\[
II \leq e^{N \log H (j - \# G(x,j,N))}  \leq e^{N \log H j(1-\beta)}.
\]
From the definition of $\beta$, we obtain 
\[
-\chi  N \beta j + N \log H j (1-\beta)  \leq -\frac{3 \chi }{4} Nj +  \frac{\chi}{2}Nj = -\frac{\chi}{4} Nj.
\]
Therefore, 
\[
\|Df^{jN}_x|_{E^-}\| \leq e^{-\frac{\chi }{4} jN},
\]
for every $j\in \mathbb{N}$, whenever $x\in K^+_N$.  Define similarly $K^-_N$ for past iterates and $E^+$ instead of $E^-$. By the same computation,  we conclude that 
\[
m(K^-_N) >1 - \frac{\delta}{1-\beta}.
\]
Thus,
\[
m(K^-_N \cap K^+_N )  = 1- m((K^-_N)^c \cup (K^+_N)^c) \geq 1 - \frac{2\delta}{1-\beta}.
\]
Given $\rho>0$, take $\delta = \frac{\rho(1-\beta)}{2}$ and take $N$ large enough such that $m(K_N) > 1- \delta$. Thus, $m(P_N(\chi')) > 1-\rho$. 
\end{proof}

Just as above, set $\chi' = \frac{\chi}{4}$. Fix $\varepsilon>0$ and $\theta>0$.  Set $L' = 4\max\left\{\frac{c_0}{\varepsilon}, 1\right\}$, where $c_0 = \chi' + \max\{\log \|Df\|, \log \|Df^{-1}\|\}$. Take $\rho < \frac{\theta}{L'}$. By Claims \ref{claim.largemeasureK} and \ref{claim.largemeasureP}, we fix $\delta>0$ small, and $N$ large such that $m(K_N) > 1-\delta$, which implies that $m(P_N(\chi')) > 1-\rho$.  By Proposition \ref{prop.hyptimestopesinblock}, 
\[
m(\Delta_{C,\chi', \varepsilon}) > 1-\theta,
\]
where $C = C(f,N)$ is given in the statement of that proposition.  Observe that $H$ in the proof of Claim \ref{claim.largemeasureP} and $c_0$ and $C$ defined above are uniform in a $C^1$-neighborhood of $f$.  Also, the starting point for obtaining this estimate was that $m(K_N)> 1-\delta$. Consider $\widetilde{K}_N$ to be the set contained in the Grassmanian of $E^c$ defined by points of the form $(x,E_x^*)$, where $x\in K_N$ and $*=-$ or $+$.  Observe that, the condition defining $K_N$ is open, meaning, there are two open sets $U^s$ and $U^u$, 
%in $\mathbb{P}TM$, such that their union contains $\widetilde{K}_N$, 
with the property that if $(x,E) \in U^s$, then $\|Dg^N_x|_{E}\| \leq e^{-\chi' N}$, for every $g$ sufficiently $C^1$-close to $f$. Similar conclusion for $U^u$ and past iterates. 

Since $f$ is stably-ergodic, every $g\in\mathit{Diff}^2_{m}(M)$ which is $C^1$-sufficiently close to $f$ is ergodic. Moreover, by the continuity of the center Lyapunov exponents, $m$ is hyperbolic of saddle type for $g$. Let $m^+_g = \int \delta_{E^+_g(x)} dm$ and $m^-_g= \int \delta_{E^-_g(x)} dm$ be the unstable and stable lifts of $m$ for $g$ in the Grassmanian of $E^c$, respectively. Again by the continuity of the center Lyapunov exponents if $g_k\to f$ in $\mathit{Diff}^1_{m}(M)$, then for $*\in \{+, -\}$, $m^*_{g_k} \to m^*$ in the weak*-topology, see \cite{BP}.  Therefore, for every $g$ $C^r$ map in a $C^1$ neighborhood of $f$, $m^+_g(U^u) > 1-\delta$ and $m^-_g(U^s) >1-\delta$, which implies that $m(K_N(g)) > 1-\delta$.  The rest of the estimates holds in a uniform way, since $N$, $\varepsilon$ and $\theta$ are fixed. Therefore, $m(\Delta_{C,\chi', \varepsilon}(g))>1-\theta$. 
\end{proof}

For $*\in \{s,u\}$ and $\delta>0$, we denote $W^{*}_{\delta}(x)=W^{*}(x)\cap B^{*}(x,\delta)$.

\begin{lemma}\label{prop.big.transverse}
    Let $f\in \mathcal{A}$. If $p$ is a periodic hyperbolic point for $f$ such that $p \sim_h m$, then for every $\delta>0$ there exists $L>0$ and a compact set $M_\delta\subset M$, $m(M_\delta)>1-\delta$ such that $W^{s}_L(\mathcal{O}(p))\pitchfork W^u_\delta(x)\neq\emptyset$ for every $x\in M_\delta$.
\end{lemma}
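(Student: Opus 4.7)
The plan is to combine the uniform Pesin block from Lemma~\ref{prop.uniform.pesin} with an ergodic-theoretic pullback argument. Fix $\delta>0$, let $\chi>0$ be such that $m$ is $\chi$-hyperbolic, set $\chi'=\chi/4$, and pick $\varepsilon\in(0,\chi')$ small. Apply Lemma~\ref{prop.uniform.pesin} with a small $\theta\ll\delta$ to obtain a compact Pesin block $\Delta=\Delta_{C,\chi',\varepsilon}$ with $m(\Delta)>1-\theta$. Since $p\sim_h m$ provides a measurable set $X$ with $m(X)>0$ on which $W^u(x)\pitchfork W^s(\mathcal{O}(p))\neq\emptyset$, inner regularity of $m$ together with a Lusin-type exhaustion applied to the (measurable) sizes of the transverse intersections produces a compact subset $X_0\subset X\cap\Delta$ with $m(X_0)>0$ and uniform constants $d_0,L_0>0$ such that, for every $x\in X_0$, there is a transverse intersection $w(x)\in W^u_{d_0}(x)\cap W^s_{L_0}(\mathcal{O}(p))$.

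For $y\in\Delta$ with $f^n(y)\in\Delta$, the Pesin block property yields $\|Df^{-n}|_{F_{f^n(y)}}\|\leq Ce^{(\varepsilon-\chi')n}$; combined with bounded distortion inside unstable leaves of intrinsic radius $\leq d_0$, pulling back any $w\in W^u_{d_0}(f^n(y))$ by $f^{-n}$ gives
\[
d_{W^u(y)}\bigl(y,f^{-n}(w)\bigr)\;\leq\; C'e^{(\varepsilon-\chi')n}\,d_0.
\]
I choose $n_0=n_0(\delta)$ large enough that this upper bound is strictly less than $\delta$ whenever $n\geq n_0$. Ergodicity of $m$ and $m(X_0)>0$ then provide $N_1=N_1(\delta)$ with $m\bigl(\bigcup_{n=n_0}^{N_1}f^{-n}(X_0)\bigr)>1-\theta$, and I set
\[
M_\delta \;=\; \Delta\,\cap\,\bigcup_{n=n_0}^{N_1} f^{-n}(X_0),
\]
which is compact and satisfies $m(M_\delta)>1-2\theta>1-\delta$.

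Given $y\in M_\delta$, pick $n\in[n_0,N_1]$ with $f^n(y)\in X_0$, let $w=w(f^n(y))$, and set $w'=f^{-n}(w)$. The contraction estimate above places $w'\in W^u_\delta(y)$; since $w\in W^s(\mathcal{O}(p))$ and $W^s(\mathcal{O}(p))$ is $f$-invariant, $w'\in W^s(\mathcal{O}(p))$, and transversality of the intersection is preserved by the diffeomorphism $f^{-n}$. Moreover, setting $H=\sup_M\|Df^{-1}\|$, the $W^s$-arc from $w$ to its footpoint in $\mathcal{O}(p)$ has intrinsic length $\leq L_0$, and applying $f^{-n}$ with $n\leq N_1$ stretches it to length at most $H^{N_1}L_0=:L(\delta)$, so $w'\in W^s_{L}(\mathcal{O}(p))$, as required. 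The main obstacle I anticipate is Step~1: simultaneously placing a compact $X_0$ inside the Pesin block $\Delta$ while keeping the transverse scales $d_0,L_0$ uniform on $X_0$; this requires carefully combining inner regularity of $m$ with the measurable dependence of the intersection data on the base point. Once this is arranged, the remaining steps form a standard ergodic-pullback argument, whose exponential contraction rate is exactly what the Pesin block of Lemma~\ref{prop.uniform.pesin} supplies in a form uniform in $C^1$-perturbations of $f$.
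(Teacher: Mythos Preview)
Your argument is correct, but it takes a genuinely different route from the paper's.

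The paper proceeds much more directly: it takes a large-measure compact Pesin set $M_\delta$ on which $W^u_\delta(x)$ exists and varies continuously, invokes the Inclination Lemma of \cite{BCS1} to upgrade $W^s(\mathcal{O}(p))\pitchfork W^u(x)\neq\emptyset$ to $W^s(\mathcal{O}(p))\pitchfork W^u_\delta(x)\neq\emptyset$, and then extracts the uniform bound $L$ from compactness of $M_\delta$ and continuity of $x\mapsto W^u_\delta(x)$. No explicit iteration or ergodic argument appears.

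Your approach instead builds $M_\delta$ by an ergodic pullback: starting from a compact positive-measure seed $X_0\subset\Delta$ with uniform intersection data $(d_0,L_0)$, you spread it to large measure via $\bigcup_{n=n_0}^{N_1}f^{-n}(X_0)$ and control both the shrinking of the $W^u$-distance (by the Pesin block bound $\|Df^{-n}|_{F_{f^n(y)}}\|\leq Ce^{(\varepsilon-\chi')n}$) and the growth of the $W^s$-length (crudely, by $H^{N_1}L_0$). This is more hands-on and self-contained: it avoids the black-box Inclination Lemma of \cite{BCS1} and makes the dependence of $L$ on $\delta$ completely explicit. The paper's route is considerably shorter and requires no bookkeeping of $n_0,N_1$, but leans on that external lemma.

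Two small remarks. First, you do not actually need Lemma~\ref{prop.uniform.pesin} here; an ordinary Pesin block for the fixed map $f$ (as the paper uses) is enough, since no perturbation of $f$ enters this statement. Second, in Step~1 you should also arrange $d_0$ to be no larger than the uniform size of the Pesin local unstable charts on $\Delta$, so that the bounded-distortion passage from $\|Df^{-n}|_F\|$ to intrinsic $W^u$-distances is justified; this is easy to ensure (e.g.\ by first iterating backward once inside $\Delta$ to bring the intersection close), and is consistent with the obstacle you already flagged.
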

\begin{proof}
    Recall Definition \ref{homoclinic.point}. Up to reducing $\delta$, by Pesin theory \cite{P}, we can find a compact set $M_\delta$, $m(M_\delta)>1-\delta$, such that for every $x\in M_\delta$, the Pesin unstable manifold, $W^u(x)$, contains a disk of size $\delta$, $W^s(\mathcal{O}(p))\pitchfork W^u(x)\neq\emptyset$, $x$ is a recurrent point and the map $M_\delta \ni x\mapsto W^s_\delta(x)$ is continuous. Using the Inclination Lemma of \cite{BCS1} we can assume that $W^s(\mathcal{O}(p))\pitchfork W^u_\delta(x)\neq\emptyset$.

    By compactness of $M_\delta$ and continuity of $x\mapsto W^s_\delta(x)$ there exists $L>0$ such that $W^s_L(\mathcal{O}(p))\pitchfork W^u_\delta(x)$.
\end{proof}

\begin{proof}[Proof of Proposition~\ref{prop.homocl}]
    Suppose there exists $f_k\to f$ such that for every $k$ the conclusion does not hold. Fix $\varepsilon>0$ and $\theta>0$ and let $\Delta_k=\Delta_{C,\chi',\varepsilon}(f_k)$ be given by Lemma~\ref{prop.uniform.pesin}. Up to taking a subsequence we can assume that $\Delta_k\to \Delta$ in the Hausdorff topology of compact sets. Observe that the complement of $\Delta_{C,\chi',\varepsilon}(g)$ is open in $(x,g)$, then $\Delta\subset \Delta_{C,\chi',\varepsilon}(f)$.

    \begin{claim}
        $m(\Delta)\geq 1-\theta$.
    \end{claim}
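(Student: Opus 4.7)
The plan is to exploit outer regularity of the Borel probability measure $m$ together with the elementary fact that Hausdorff convergence of compact sets forces the approximating sets into every open neighborhood of the limit. No perturbation theory or dynamics is needed at this step: the claim is a purely measure-theoretic consequence of $\Delta_k \to \Delta$ in the Hausdorff topology, combined with the uniform lower bound $m(\Delta_k) \geq 1-\theta$ supplied by Lemma~\ref{prop.uniform.pesin}.

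First I would fix an arbitrary open set $U \subset M$ containing $\Delta$. Since $\Delta$ is compact, there exists $\varepsilon_U > 0$ such that the open $\varepsilon_U$-neighborhood $B(\Delta,\varepsilon_U)$ is contained in $U$. The definition of Hausdorff convergence gives a $k_0$ such that for every $k \geq k_0$, every point of $\Delta_k$ lies within distance $\varepsilon_U$ of some point of $\Delta$; in particular $\Delta_k \subset B(\Delta,\varepsilon_U) \subset U$. Applying Lemma~\ref{prop.uniform.pesin} to the volume-preserving $C^2$ diffeomorphism $f_k$ (which lies in the neighborhood $\mathcal{U}(\chi,\varepsilon,\theta)$ for $k$ large), we obtain $m(\Delta_k) \geq 1-\theta$, whence $m(U) \geq m(\Delta_k) \geq 1-\theta$ for every $k \geq k_0$.

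To finish, I invoke outer regularity: $m$ is a Borel probability measure on the compact metric space $M$, so it is outer regular on Borel sets. In particular,
\[
m(\Delta) \;=\; \inf\bigl\{\,m(U)\,:\, U \supset \Delta,\ U\ \text{open}\,\bigr\}.
\]
The previous step shows that every open $U \supset \Delta$ satisfies $m(U) \geq 1-\theta$, so taking the infimum yields $m(\Delta) \geq 1-\theta$, which is exactly the claim.

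There is no real obstacle here; the only point that requires a moment of care is arranging that Lemma~\ref{prop.uniform.pesin} actually applies to $f_k$ for all sufficiently large $k$, but this is immediate since the $C^1$-neighborhood $\mathcal{U}(\chi,\varepsilon,\theta)$ given by that lemma contains $f_k$ eventually (we chose $f_k \to f$ in the $C^1$-topology, and by hypothesis each $f_k$ is a $C^2$ volume-preserving diffeomorphism). The subsequent, more substantial step in the proof of Proposition~\ref{prop.homocl} will be to use this $\Delta$ together with Lemma~\ref{prop.big.transverse} to locate transverse homoclinic intersections of $p_{f_k}$ with the Pesin unstable manifolds in $\Delta_k$, which will contradict the assumption that $p_{f_k} \not\sim_h m$ for the perturbations $f_k$.
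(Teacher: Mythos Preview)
Your proof is correct and follows essentially the same approach as the paper: both use Hausdorff convergence to trap $\Delta_k$ inside arbitrarily small open neighborhoods of $\Delta$, carry the bound $m(\Delta_k)>1-\theta$ over to those neighborhoods, and then pass to the limit. The only cosmetic difference is that the paper works with the concrete family $B_\beta(\Delta)$ and the downward continuity $m(\Delta)=\lim_{\beta\to 0} m(B_\beta(\Delta))$, whereas you phrase the final step via outer regularity; these are equivalent here.
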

\begin{proof}
    Take $\beta>0$, for $k$ large $\Delta_k\subset B_\beta(\Delta)$, then $m(B_\beta(\Delta))>1-\theta$, for every $\beta>0$. As $\Delta$ is compact, $m(\Delta)\geq 1-\theta$.
\end{proof}
    
\begin{rem}\label{rem.cont.manifold} By the uniformity of the parameters in $\Delta_k$, we can choose $\delta>0$ such that $W^u(f_k,x_k)$ has diameter bigger than $\delta$ for every $x_k\in\Delta_k$ if $k$ is big enough. Moreover, if $x_k\in \Delta_k$ and $x_k\to x$ with $x\in \Delta$, then $W^u_\delta(f_k, x_k)\to W^u_\delta(x)$ uniformly. This is a consequence of the graph transform used to prove the existence of $W^u$, see \cite{P}.
\end{rem}

Take $\delta<\theta$ such that the unstable manifolds in $\Delta_k$ have diameter bigger than $\delta$ and let $M_\delta$ be given by Lemma~\ref{prop.big.transverse}. 
    
    Let $p_k$ be the $f_k$ hyperbolic periodic point that is the continuation of $p$. By continuity of the unstable manifold with respect to $(p_k,f_k)$ we have that $W^s_L(f_k,\mathcal{O}(p_k))\pitchfork W^u_\delta(f,x)\neq\emptyset$ for every $x\in M_\delta$.

    Observe that $m(\Delta_k\cap M_\delta)>0$. Moreover, if $x_k\in \Delta_k\cap M_\delta$ and $x_k\to x$ with $x\in \Delta$, then by Remark \ref{rem.cont.manifold}, $W^u_\delta(f_k, x_k)\to W^u_\delta(x)$. Therefore, for $k$ big enough, we conclude that $W^s_L(f_k,\mathcal{O}(p_k))\pitchfork W^u_\delta(f_k,x_k)\neq\emptyset$. 

    The same calculation, changing the stable manifolds by the unstable manifolds,  shows that $p_k$ is homoclinically related to $\mu$. This contradiction proves the proposition.  \qedhere
\end{proof}

\section{Proof of Theorem~\ref{theo-intro} and Corollaries}\label{section.proofA}

Recall that for fixed $r\geq 2$, $\mathcal{A}\subset \mathit{PH}^r_{m}(M)$ denotes the set formed by diffeomorphisms such that they are non-uniformly hyperbolic, stably-ergodic and the Lyapunov exponents of $Df\vert_{E^c}$ vary continuously. 

If $f$ is $\chi$-hyperbolic, then the $u$-bunched condition, see Equation (\ref{cu}), implies that there exists $C>0$ and $\lambda\in (0,1)$ such that
$$\|Df^n|_{E^{uu}}\|\|(Df^n|_{E^{uu}})^{-1}\|e^\frac{-\chi \alpha |n|}{2}<C\lambda^{|n|}.$$
for $n\in \mathbb{Z}$. In this case, the cocycle $\mathcal{F}=Df\vert_{E^{uu}}$ is $\chi/2$-fiber bunched according to Definition \ref{fb}. 

Observe that for every $f\in \mathcal{A}$ which is $\chi$-hyperbolic and $u$-bunched, there exists a $C^1$ neighborhood of $f$ such that any map in this neighborhood is $\chi$-hyperbolic and $u$-bunched.

\begin{theo}\label{theorem-u-simple}
    Fix $r\geq 2$. Let $f\in \mathcal{A}$ be $u$-bunched. Then, $f$ can be $C^r$-approximated by $C^2$ open sets of volume-preserving maps with simple unstable Lyapunov spectrum. That is, the Lyapunov exponents associated to $Dg\vert_{E^{uu}}$ have multiplicity one.
\end{theo}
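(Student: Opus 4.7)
\textbf{Proof plan for Theorem~\ref{theorem-u-simple}.}

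The plan is to reduce to the simplicity criterion (Theorem~\ref{theorem-criterium}) applied to $\mathcal{F}=Df|_{E^{uu}}$ with the volume measure $m$, after two localized $C^r$ perturbations that produce a pinching periodic point and a twisting homoclinic intersection. Since $f\in\mathcal{A}$ is $\chi$-hyperbolic for some $\chi>0$ and $u$-bunched, the cocycle $\mathcal{F}$ is $\chi/2$-fiber bunched in the sense of Definition~\ref{fb}. Since $f$ is $C^2$ volume-preserving and non-uniformly hyperbolic, Pesin's entropy formula makes $m$ an SRB measure, and partial hyperbolicity with nontrivial $E^{uu}$ and $E^{ss}$ makes it of saddle type. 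Hence the ambient measure-theoretic hypotheses of Theorem~\ref{theorem-criterium} are met; only the existence of a pinching-twisting pair remains.

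To produce such a pair, I would first apply Katok's horseshoe theorem together with Theorem~2.12 of \cite{BCS1} to obtain a transitive hyperbolic set $\Lambda$ with $\Lambda\sim_h m$, a hyperbolic periodic point $p\in\Lambda$, and a transverse homoclinic point $z\in W^u(p)\pitchfork W^s(p)$ whose orbit avoids $\mathcal{O}(p)$. By Proposition~\ref{prop.homocl}, the relation $p\sim_h m$ is preserved in a $C^1$-neighborhood of $f$ inside $\mathit{Diff}^2_m(M)$. I would then perform two $C^r$-small volume-preserving local perturbations, following the template used in Section~\ref{sec.proofthmB}. The first, supported in a small neighborhood of $p$ disjoint from the remaining relevant orbits, adjusts $Df^{n(p)}|_{E^{uu}_p}$ via a Franks-type volume-preserving construction so that its eigenvalues have pairwise distinct absolute values (pinching). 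The second, supported in a small neighborhood of $z$ disjoint from $\mathcal{O}(p)$ and from the iterates of $z$ used in the holonomy construction, composes $Df_z|_{E^{uu}_z}$ with a linear isomorphism $R$ of $E^{uu}_z$ chosen so that the new transition
$$\psi^{g_0}_{p,z}=H^{s,f}_{f^l(z),p}\circ R|_{E^{uu}_z}\circ H^{u,f}_{p,z}$$
sends no invariant sum of eigenspaces of $\mathcal{F}^{n(p)}_p$ into its complementary invariant sum (twisting); the bad $R$'s form a finite union of proper algebraic subvarieties, so a generic choice works.

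To open up, I would use that pinching is $C^1$-open via the continuation of $p$; twisting is $C^2$-open via the continuity of the invariant holonomies, of the continuations of $p$ and $z$, and of the invariant manifolds, as recorded in the remark after Theorem~\ref{theorem-criterium}; $u$-bunching is $C^1$-open and the uniform $\chi$-hyperbolicity persists in $\mathcal{A}$ by continuity of the center Lyapunov exponents together with the uniform domination on $E^{uu}\oplus E^{ss}$. Combined with Proposition~\ref{prop.homocl} (persistence of $p\sim_h m$) and stable ergodicity of $f$ (ensuring $m$ is ergodic throughout the perturbation), we obtain a $C^2$-open neighborhood of the perturbed $g$ on which every hypothesis of Theorem~\ref{theorem-criterium} holds for $Dg|_{E^{uu}_g}$ and $m$, yielding simple unstable Lyapunov spectrum throughout the open set. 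Since the two perturbations are $C^r$-small, these open sets $C^r$-accumulate $f$.

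The main difficulty is the twisting perturbation: one must construct a $C^r$-small volume-preserving local modification of $f$ that changes $Dg_z|_{E^{uu}_z}$ by a prescribed linear factor $R$ without touching the rest of the orbits supporting the pinching data and the homoclinic connection, and without disturbing $u$-bunching or the uniform hyperbolicity on $E^{uu}\oplus E^{ss}$. Proposition~\ref{prop.homocl} is what removes the otherwise delicate issue of tracking the homoclinic class through the perturbation; the remaining work is thus a careful Franks-type construction inside $\mathit{PH}^r_m(M)$.
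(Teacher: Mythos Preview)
Your plan is essentially the paper's own: produce a horseshoe $\Lambda\sim_h m$ via Katok and \cite[Theorem 2.12]{BCS1}, perturb to obtain a pinching periodic point and a twisting homoclinic intersection, invoke Proposition~\ref{prop.homocl}/Corollary~\ref{prop.homocl.ferradura} so that the homoclinic relation with $m$ survives the perturbation, and then apply Theorem~\ref{theorem-criterium} to $Dg|_{E^{uu}}$ with the SRB measure $m$. The twisting step, the openness argument, and the verification that $m$ is ergodic, SRB, $\chi$-hyperbolic of saddle type, and that the cocycle is $\chi/2$-fiber bunched, are all exactly as in the paper.

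The one substantive difference is how pinching is obtained. The paper does \emph{not} perturb locally near $p$; instead it invokes \cite[Theorem~3.2]{ACW} to pass to a nearby $C^r$ volume-preserving $f$ for which the continuation $\Lambda_f$ carries a dominated splitting into one-dimensional subbundles, so any periodic point in $\Lambda_f$ is (after an arbitrarily small further adjustment) automatically pinching. This buys two things: it sidesteps the need for a $C^r$ volume-preserving Franks-type lemma (the classical Franks lemma is only $C^1$, and your phrase ``Franks-type volume-preserving construction'' hides a nontrivial issue when $r\geq 2$), and it avoids the awkwardness that a perturbation supported near $p$ cannot be disjoint from the orbit of the homoclinic point $z$, since that orbit accumulates on $p$ in both time directions. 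Your route is still workable---simple eigenvalues on $E^{uu}$ is an open, algebraically generic condition on the derivative along $\mathcal{O}(p)$, so a small $C^r$ volume-preserving local composition suffices, and continuity of holonomies absorbs the effect on the orbit of $z$---but you should either justify this $C^r$ perturbation explicitly or, more simply, cite \cite[Theorem~3.2]{ACW} as the paper does.
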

\begin{proof}
    Consider $f_0\in \mathcal{A}$ being $u$-bunched. Since $f_0$ is non-uniformly hyperbolic, by Katok's Horseshoe Theorem \cite{K} and Theorem 2.12 of \cite{BCS1}, we know there exists a horseshoe $\Lambda$ such that $\Lambda\sim_h m$. 

    Let $\mathcal{V}_0$ denote a neighborhood of $f_0$ in $\mathit{Diff}^2_{m}(M)$. As before, by Theorem 3.2 of \cite{ACW}, there exists $f\in \mathcal{V}_0$ such that the continuation $\Lambda_{f}$ admits a dominated splitting into one dimensional bundles.

    Since $\mathcal{V}$ is an open set, $f\in \mathcal{V}$ and we can suppose $\mathcal{V}_0$ is small enough to apply Proposition \ref{prop.homocl.ferradura}. Therefore, $\Lambda_{f}\sim_h m$.

    Using the same arguments than in the proof of Theorem \ref{theorem-mme-simple}, we can find an open set $\mathcal{W}\subset \mathit{Diff}^2_{m}(M)$, arbitrarily $C^r$ close to $f$, such that for every $g\in \mathcal{W}$, there exist $p(g)$ and $z(g)$ verifying Definition \ref{pinch.twist.nuh}. Moreover, $p(g)\sim_h m$. Here we are using again Proposition \ref{prop.homocl.ferradura}. Applying Theorem \ref{theorem-criterium} to the cocycle $\mathcal{G}=Dg\vert_{E^{uu}_g}$ we conclude the proof. 
\end{proof}

\begin{proof}[Proof of Theorem \ref{theo-intro}]
    We can apply Theorem \ref{theorem-u-simple} to conclude that there exists $\mathcal{W}$ arbitrarily close to $f$ such that every $g\in \mathcal{W}$ has simple unstable Lyapunov spectrum. Analogously, considering $f^{-1}$, we prove that the stable Lyapunov spectrum is also simple.
\end{proof}

Let $f$ be a partially hyperbolic diffeomorphism. We say that $f$ is center bunched if the functions in Equation (\ref{ph}) satisfy, $$\nu< \gamma \widehat{\gamma} \qquad \text{and} \qquad \widehat{\nu}< \gamma \widehat{\gamma}.$$ Observe that this condition is $C^1$-open and every partially hyperbolic map with $\dim E^c=1$ is center-bunched.  

We say that $f$ is \textit{accessible} if for any two points $x,y\in M$, there exists a path that connects $x$ to $y$, which is a concatenation of finitely many subpaths, each of which lies entirely in a single leaf of $W^{ss}$ or a single leaf of $W^{uu}$. 

By \cite{BW}, every $C^2$ volume-preserving partially hyperbolic map which is center-bunched and accessible is ergodic. 

\begin{proof}[Proof of Corollary \ref{cor-intro}]
    As $\int\log \|Df|E^c\|dm$ varies continuously with $f$, there exist $\chi>0$ and a $C^1$ open set $\mathcal{V}$, such that $|\int\log \|Dg|E^c\|dm|>\chi>0$, for every $g\in \mathcal{V}$. If $\delta>0$ is small enough and $c(f)<\delta$, then any $g\in \mathcal{V}$ is $su$-bunched for this $\chi$. By \cite{RHRHU}, up to a perturbation we can find $g\in \mathcal{V}$ such that is $C^r$-arbitrarily close to $f$ and $g$ is stably ergodic. The conclusion follows applying Theorem~\ref{theo-intro} to $g\in \mathcal{A}$. 
\end{proof}

\begin{proof}[Proof of Corollary \ref{theorem-main}] Let $f:M\to M$ be a $C^r$ partially hyperbolic symplectic diffeomorphism which is center bunched, accessible and with $2$-dimensional center bundle. Suppose also that $f$ admits a periodic point. Theorem A of \cite{SW} implies the existence of maps in $\mathit{PH}^r_{m}(\mathbb{T}^{2d})$ satisfying these hypotheses. 

Since the center bundle of $f$ is two-dimensional, \cite{AV2} implies that the accessibility property is $C^1$-open. 
Therefore, by \cite{BW} and the center bunching hypothesis, tehre exists $\mathcal{W}$, a $C^1$ neighborhood of $f$, such that every $C^2$ diffeomorphism in $\mathcal{W}$ is stably ergodic. 

By \cite{LMY, M}, there exists a non-empty set $\mathcal{V}\subset \mathcal{W}$ which is $C^2$-open such that $\mathcal{V}$ is $C^r$-arbitrarily close to $f$ and every $g\in \mathcal{V}$ is non-uniformly hyperbolic and a continuity point for the center Lyapunov exponents. Moreover, $\lambda^c_1(g)\neq \lambda^c_2(g)$ for every $g\in \mathcal{V}$.

By the conclusions above, there exists a non-empty set $\mathcal{V}\subset \mathcal{A}$. The corollary follows by Theorem \ref{theo-intro}.
\end{proof}

\begin{proof}[Proof of Corollary \ref{standard}]
Observe that by the definition of the standard map $g_{\lambda}$, the map $f\times g_0$ is a partially hyperbolic symplectic map which is center-bunched. Since these conditions are $C^1$-open, we conclude that there exists a neighborhood of $f\times g_{0}$ among symplectic skew-products over $f$ where these hypotheses hold. We denote this set $\mathcal{U}_1$.

By Theorem 4 of \cite{HS}, every map in $\mathcal{U}_1$ can be $C^r$-approxi\-mat\-ed by an open set $\mathcal{U}_2\subset \mathcal{U}_1$ such that every map in $\mathcal{U}_2$ is symplectic, accessible and has a periodic point. See Lemma 7.2 in \cite{HS}.

Since the perturbation of \cite{LMY, M} can be achieved among skew-products over $f$, we can conclude that every map in $\mathcal{U}_2$ can be $C^r$-approximated by a volume-preserving skew-product over $f$ which is non-uniformly hyperbolic and a continuity point for the center Lyapunov exponents. Moreover, this map has simple center Lyapunov exponents.

As a conclusion, we get that every map in $\mathcal{U}_2$ can be $C^r$-approximated by maps in $\mathcal{A}$ which are skew-products over $f$.

It is easy to see that if $g$ is a partially hyperbolic skew product over $f$ then $c(f)=c(g)$.
Corollary \ref{standard} follows by Theorem \ref{theo-intro}, since $c(f)=0$ and $f\times g_{\lambda}\in \mathcal{U}_1$ for every $\lambda$ close enough to zero.
\end{proof}

{\em{Acknowledgements.}} We are thankful to Yuri Lima and Sylvain Crovisier for helpful conversations on this work. K.M. thanks the hospitality of Universidade Federal do Cear\'a during her postdoc when this work was elaborated.

\end{document}